\documentclass[a4paper,12pt]{article}
\usepackage[ansinew]{inputenc}
\usepackage{amsfonts}
\usepackage{latexsym}
\usepackage{amsmath}
\usepackage{amssymb}
\usepackage{multirow}
\usepackage{eepic}
\usepackage{graphicx}
\usepackage{color}
\usepackage{tikz}
\usepackage{geometry}

\newcommand{\C}{\mathbb{C}}
\newcommand{\N}{\mathbb{N}}

\newtheorem{defin}{Definition}[section]

\newtheorem{theorem}[defin]{Theorem}
\newtheorem{exa}{Example}[section]
\newenvironment{example}{\begin{exa}\rm}{\end{exa}}
\newtheorem{exas}{Examples}

\newtheorem{lemma}[defin]{Lemma}

\newtheorem{corollary}[defin]{Corollary}
\newenvironment{proof}
{\noindent{\it Proof.}}{\hfill $\Box$\par\vspace{2.5mm}}

\numberwithin{equation}{section}

\title{$Q$-difference analogue of the Stothers-Mason theorem}
\vspace{0.5cm}

\author{Jian-Tang Lu, Xing-Xing Lu\footnote{X.-X. Lu is the corresponding author} and Zhi-Tao Wen\footnote{\ Z.-T. Wen is supposed by the National Natural Science Foundation of China (No.~12471076)}
 }
\date{}

\begin{document}
\maketitle

\begin{abstract}

In this paper, we give a new definition of the $q$-weight of zeros,  which reduces to the multiplicity of zeros as $q\to 1$. Furthermore, we obtain a $q$-difference version of the Stothers-Mason theorem by means of the new definition of the $q$-difference radical, which covers the classical Stothers-Mason theorem as $q\to 1$. As applications, we study the polynomial solutions of $q$-difference Fermat type functional equations.

\medskip
\noindent
\textbf{Keyword}: $q$-weight of zeros, $q$-difference radical, Stothers--Mason theorem, Fermat type functional equation

\medskip
\noindent
\textbf{2020MSC}: 39B32, 30D35

\end{abstract}

\section{Introduction}
Let $P$ be a polynomial. The radical ${\rm rad}(P)$ is the product of distinct linear factors of $P$. Let $a$, $b$ and $c$ be relatively prime polynomials such that not all of them are identically zero. The Stothers--Mason theorem~\cite{mason1984},~\cite{stothers1981}, see also~\cite{Snyder}, states that if they satisfy $a+b=c$, then
    $$
    \max\{\deg(a),\deg(b),\deg(c)\}\leq \deg({\rm rad}(abc))-1.
    $$

A difference analogue of the Stothers-Mason theorem or difference $abc$ theorem for polynomials was given by Ishizaki et al. in \cite[Theorem~3.1]{IKLT}. Let $a,b$ and $c$ be relatively prime polynomials in $\C[z]$, which are not all constants, satisfying $a+b=c$. Then,
    $$
    \max\{\deg(a),\deg(b),\deg(c)\}\leq \tilde{n}_\kappa(a)+\tilde{n}_\kappa(b)+\tilde{n}_\kappa(c)-1,
    $$
where $\kappa\in\C\setminus\{0\}$, and
    $$
    \tilde{n}_\kappa(p)=\sum_{w\in\C}(\text{ord}_w(p)-\min\{\text{ord}_w(p),\text{ord}_{w+\kappa}(p)\})
    $$
for a polynomial $p$ by $\text{ord}_w(p)$ denoting the order of zeros of $p$ at $z=w$.

Another difference analogue of the Stothers-Mason theorem was given by Ishizaki and Wen in \cite[Theorem~3.5]{IW2022}. Let $a, b$ and $c$ be relatively shifting prime polynomials, see \cite[pages~736-737]{IW2022}, such that $a, b$ and $c$ are not all constants satisfying $a+b=c$. Then
    $$
    \max\{\deg(a),\deg(b),\deg(c)\}\leq \deg(\text{rad}_\Delta(abc))-1,
    $$
where $\text{rad}_\Delta(abc)$ is difference radical of $abc$ defined in \cite{IW2022}.

One of the purposes of this paper is to introduce a $q$-difference analogue of the radical of polynomials and a $q$-difference analogue of the Stothers-Mason theorem, which are different from the results in \cite{DCL}. These $q$-difference counterparts reduce to the classical radical of polynomials and Stothers-Mason theorem, respectively, as $q\to 1$. As applications, we study polynomial solutions of $q$-difference Fermat type functional equations.

In Section 2, we define the $q$-weight of a zero of an analytic function in a domain $G$, and investigate its properties
via $q$-difference calculus. We then introduce the $q$-difference radical by means of the $q$-weight of zeros and focus on the $q$-difference analogue of the Stothers-Mason theorem by using the
$q$-difference radical in Section 3. Section 4 is concerned with an extension of the aforementioned $q$-difference Stothers-Mason theorem. Finally, in Section 5 we study the polynomial
solutions of $q$-difference version of the Fermat type functional equations.

\section{$Q$-weight of zeros}\label{dirad}

Following \cite{KC}, we denote $q$-number which is the $q$-analogue of the natural
number $n$ by
    $$
    [n]_q=\frac{q^n-1}{q-1}=q^{n-1}+\cdots+q+1.
    $$
We set the $q$-analogue of factorial $n!$ by
    $$
    [n]_q!=
    \begin{cases}
    1 \quad &n=0;\\
    [n]_q\times[n-1]_q\times \cdots \times [1]_q \quad &n\in\N^+.
    \end{cases}
    $$
Let us set the $q$-analogue of power function $(z-a)^n$ by
    $$
    [z-a]^n_q=
    \begin{cases}
    1 \quad &n=0;\\
    (z-a)(z-aq)\cdots(z-aq^{n-1}) \quad &n\in\N^+.
    \end{cases}
    $$
We note that if $a=0$, then $[z]^n_q=z^n$ for all $n\in\N^+\cup\{0\}$. For a given function $f$, we define $q$-derivative of $f$ \cite{Jackson1909} by
    $$
    D_q f(z)=\frac{f(qz)-f(z)}{qz-z}
    $$
for $z\in\C\setminus\{0\}$ and $q\neq 1$. In addition, $q$-derivative of $f$ at zero is defined by
    $$
    D_q f(0)=\lim_{n\to\infty}\frac{f(z'q^n)-f(0)}{z'q^n}
    $$
for $z'\in\C\setminus\{0\}$. Note that if $f(z)$ is analytic at $z=0$, then $D_qf(0)=f'(0)$.
Moreover, if $f$ is an analytic function, then $\lim_{q\to 1}D_qf=f'$, and $D_q$ is known as the \emph{Jackson operator}, see e.g.,\cite[Page~41]{Jackson1910}. For any $n\in\N^+$, set $D_q^nf=D_q^{n-1}(D_qf)$ and $D_q^0f=f$.
Obviously, for all $n\in\N^+$ we have
    $$
    D_q([z-a]_q^n)=[n]_q[z-a]^{n-1}_q,
    $$
which is $q$-analogue of differential calculus $((z-a)^n)'=n(z-a)^{n-1}$.
It is a fact that $D_q$ reduces the degree of a polynomial by one.

Let $f$ be analytic in a domain $G\subset\C$ and let $n\in\N$ and $q\in\C\setminus\{0\}$.
Suppose that $z_0, qz_0,\ldots, q^nz_0\in G$.
The point $z_0$ is called a \emph{zero of $f(z)$ with $q$-weight $n$ in $G$} provided $f(z_0)$ and all $q$-derivative $D_q^k f(z_0)$ vanish for all $0\leq k<n$, but $D_q^n f(z_0)\neq 0$. In general, the point $z_0$ is called an \emph{$a$-point of $f(z)$ with $q$-weight $n$ in $G$} provided $f(z_0)-a$ and all $q$-derivative $D_q^k f(z_0)$ vanish for all $0\leq k<n$, but $D_q^n f(z_0)\neq 0$.
In particular, if $z_0$ is an $a$-point of $f$ with $q$-weight 1, then we also call $z_0$ is an $a$-point of $f$ with simple $q$-weight. Obviously, $z_0=0$ being a zero of $f$ with $q$-weight $n$ is equivalent to $z_0=0$ being a zero of $f$ with multiplicity $n$.

We emphasize here that $q$-weight of zeros is a generalization of multiplicity of zeros of an analytic function. As $q\to 1$, we see that $q$-weight of zeros reduces to the multiplicity of zeros of an analytic function.

\begin{example}
Suppose that $n\in\N^+$ and $a, q\in\C$ such that $q\neq 0$, and $f(z)=[z-a]_q^n$. Then we have $D_q^kf(a)=0$ for any $k\in\N$ satisfying $0\leq k<n$, but $D_q^nf(a)\neq 0$. Thus, it is obvious that $z=a$ is a zero of $f$ with $q$-weight $n$. In addition, we see that $z=a$ becomes a zero of $f$ with multiplicity $n$ as $q\to 1$.
\end{example}

The following example  illustrates that a transcendental function may have a zero with infinite $q$-weight.

\begin{example}
Suppose that $0<|q|<1$, and a function
    $$
    f(z)=\prod_{n=0}^\infty(1+q^nz),
    $$
which converges uniformly on any compact subset of $\{z\in\C: |z|<R\}$. The entire function $f(z)$ is a solution of linear $q$-difference equations
    $$
    f(qz)-(1+qz)f(z)=0.
    $$
In addition, $f$ is of logarithmic order $\rho_{\log}(f)=2$, see \cite[Theorem~1.1]{Wen2014}.
 The logarithmic order is defined by
    $$
    \rho_{\log}(f):=\limsup_{r\to\infty}\frac{\log\log M(r,f)}{\log\log r}.
    $$
Obviously, the distinct zero sequence of $f$ is $z_n=-(1/q)^n$ for $n=0,1,\ldots$. Thus, $z_0=-1$ is a zero of $f$ with infinite $q$-weight.
\end{example}

It is well known that $z=z_0$ is a zero of an analytic function $f$ with multiplicity $n$ if
and only if $f$ is of the form $f(z)=(z-z_0)^ng(z)$, where $g(z)$ is analytic at $z = z_0$ and
$g(z_0)\neq 0$. The $q$-analogue of this result is given as follows.

\begin{theorem}\label{zero.theorem}
Let $f$ be analytic in the domain $G\subset\C$ and let $n\in\N$ and $q\in\C\setminus\{0\}$. Suppose that $z_0, qz_0,\ldots, q^nz_0\in G$.
The point $z_0$ is a zero of $f(z)$ with $q$-weight $n$ in $G$ if and only if there exists an analytic function $g(z)$ in $G$ such that
    \begin{equation}\label{fzero.eq}
    f(z)=[z-z_0]_q^ng(z)
    \end{equation}
and $g(q^nz_0)\neq 0$.
\end{theorem}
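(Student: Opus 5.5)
Write $\phi_k(z)=[z-z_0]_q^k$. The plan is to prove both directions from two ingredients: a $q$-Leibniz rule, and the observation that $D_q^kF(z_0)$ depends only on $F(z_0),F(qz_0),\dots,F(q^kz_0)$. Iterating the definition of $D_q$ gives a $q$-binomial-type formula
$$
D_q^kF(z_0)=\frac{1}{(q-1)^k z_0^k\, q^{k(k-1)/2}}\sum_{j=0}^{k}(-1)^{k-j}q^{(k-j)(k-j-1)/2}\binom{k}{j}_q F(q^jz_0)
$$
when $z_0\neq0$ and $q$ is not a root of unity of small order. Only the triangular structure of this formula matters: $D_q^kF(z_0)$ is $F(q^kz_0)$ times a nonzero constant, plus a combination of $F(z_0),\dots,F(q^{k-1}z_0)$. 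The case $z_0=0$ is handled separately, since there $q$-weight equals multiplicity, as the paper remarks, and $[z]_q^n=z^n$, so the classical factorization applies.

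\emph{Forward direction.} Assume $z_0\neq0$. By induction on $k$ and the triangular formula, the conditions $D_q^kf(z_0)=0$ for $0\le k<n$ are equivalent to $f(q^kz_0)=0$ for $0\le k<n$. The points $z_0,qz_0,\dots,q^{n-1}z_0$ are zeros of $f$. If some of them coincide (when $q$ is a root of unity), the corresponding $q$-derivatives must be read as limits, which forces higher vanishing order there; this is the case I expect to be the main obstacle. Counting multiplicities correctly, $f$ vanishes at each root of $\phi_n$ to at least the order of that root in $\phi_n$. Hence $g=f/\phi_n$ has only removable singularities in $G$ and is analytic. It remains to show $g(q^nz_0)\neq0$. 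Using $D_q\phi_k=[k]_q\phi_{k-1}$ and the $q$-Leibniz rule
$$
D_q(uv)(z)=D_qu(z)\,v(qz)+u(z)D_qv(z),
$$
iterating $n$ times gives
$$
D_q^n(\phi_ng)(z_0)=\sum_{j=0}^n\binom{n}{j}_q D_q^{j}\phi_n(z_0)\,D_q^{n-j}g(q^{j}z_0).
$$
Since $D_q^j\phi_n(z_0)=[n]_q\cdots[n-j+1]_q\,\phi_{n-j}(z_0)$ and $\phi_m(z_0)=0$ for $m\ge1$, only the term $j=n$ survives. So
$$
D_q^nf(z_0)=[n]_q!\,g(q^nz_0),
$$
and $D_q^nf(z_0)\neq0$ gives $g(q^nz_0)\neq0$, provided $[n]_q!\neq0$.

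\emph{Converse direction.} Given $f=\phi_ng$ with $g(q^nz_0)\neq0$, the same Leibniz expansion applied at order $k<n$ yields a sum of terms each containing a factor $\phi_{n-j}(z_0)$ with $n-j\ge n-k\ge1$, so $D_q^kf(z_0)=0$. At order $n$ the expansion gives $D_q^nf(z_0)=[n]_q!\,g(q^nz_0)\neq0$. The hypothesis that $z_0,qz_0,\dots,q^nz_0\in G$ guarantees that every evaluation appearing in these expansions is defined.

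The delicate points are the degenerate cases: $q$ a root of unity, where $[n]_q!$ may vanish or the points $q^jz_0$ may coincide. I would first treat generic $q$, meaning $[k]_q\neq0$ for $k\le n$. For the degenerate case I would either argue by continuity in $q$ or note that the paper's definition implicitly assumes $[n]_q!\neq0$.
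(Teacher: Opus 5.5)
Your proof is correct when $q\neq1$ and $[n]_q!\neq0$. The paper's proof also tacitly works in that setting. Your route differs from the paper's in the second half.

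\textbf{Comparison with the paper.} The paper uses Lemma~\ref{lm2} in both directions to turn everything into point values. The $q$-weight conditions become $f(q^kz_0)=0$ for $k<n$ together with $f(q^nz_0)=q^{n(n-1)/2}(qz_0-z_0)^nD_q^nf(z_0)\neq0$. The factorization and $g(q^nz_0)\neq0$ are then read off from $f=[z-z_0]_q^ng$, and conversely.

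You use the same triangular relation to get the vanishing at $z_0,\dots,q^{n-1}z_0$. For the non-vanishing and for the whole converse you use the $q$-Leibniz expansion instead, obtaining $D_q^kf(z_0)=0$ for $k<n$ and $D_q^nf(z_0)=[n]_q!\,g(q^nz_0)$. The two routes are equivalent, because $\prod_{j=0}^{n-1}(q^nz_0-q^jz_0)=q^{n(n-1)/2}(q-1)^nz_0^n[n]_q!$.

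The paper's route is shorter once Lemma~\ref{lm2} is available. However, it silently uses two facts: that $z_0,\dots,q^{n-1}z_0$ are distinct (to factor $f$), and that $[z-z_0]_q^n$ does not vanish at $q^nz_0$ (in the converse). Your identity isolates the single constant $[n]_q!$ on which both facts hinge.

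\textbf{The degenerate case.} Your remarks here should be replaced, not developed. For $q\neq1$ a root of unity, $D_q$ at $z_0\neq0$ is a genuine difference quotient. No limits enter, and no higher-order vanishing is forced. Continuity in $q$ cannot help either, because the equivalence is false there.

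For example, take $q=-1$, $n=2$, $z_0=1$ and $f(z)=z^2-1=[z-1]_q^2\cdot1$. Since $f$ is even, $D_qf\equiv0$, so $D_q^2f(1)=0$ even though $g(q^2z_0)=1\neq0$.

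More generally, suppose $q^m=1$ with $2\le m\le n$. Then vanishing of $f$ at $z_0,\dots,q^{n-1}z_0$ already covers the whole orbit of $z_0$, which forces $D_q^nf(z_0)=0$. So the ``only if'' part is vacuous and the ``if'' part fails. The same happens at $z_0=0$, where $D_q^kf(0)=[k]_q!\,f^{(k)}(0)/k!$.

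The correct resolution is the second option you mention: add the hypothesis $[n]_q!\neq0$. This holds automatically when $|q|\neq1$, as assumed in Theorem~\ref{qabc.theorem}. Under this hypothesis, $z_0,qz_0,\dots,q^nz_0$ are pairwise distinct for $z_0\neq0$. Your ``main obstacle'' then disappears and your argument is complete.

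Incidentally, the inversion formula you quote from Lemma~\ref{lm2} holds for every $q\neq0,1$, not only away from roots of unity.
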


Before the proof of Theorem \ref{zero.theorem}, let us recall a lemma, see e.g.,\cite[Pages~12-13]{Bangerezako}, which reveals the relation between $f(q^nz)$ and $D_q^nf(z)$ for $n\in\N$.

\begin{lemma}\label{lm2}
	For any $z \in \mathbb{C}$, $k \in \mathbb{N}$, we have
    $$
	f(q^kz)=\sum_{j=0}^{k} q^{j(j-1)/2}(qz-z)^j
	\left[\begin{array}{cc}
	k\\j
	\end{array}\right]_q
	D_q^jf(z),
    $$
where
    $$
    \left[\begin{array}{cc}
    k\\
    j
    \end{array}
    \right]_q
    =\frac{[k]_q!}{[j]_q![k-j]_q!}
    $$
are called $q$-binomial coefficients. In addition, for $z \in\C\setminus\{0\}$ and $k\in\N$, we have
    $$
    D_q^kf(z) = \frac{1}{q^{k(k-1)/2}(qz-z)^k}\sum_{j=0}^k (-1)^{k-j}q^{(k-j)(k-j-1)/2}
    \left[\begin{array}{cc}
    k\\j
    \end{array}\right]_q
    f(q^jz).
    $$
\end{lemma}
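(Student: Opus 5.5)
We prove the first identity by induction on $k$ and then invert it to get the second. Throughout, write $h=qz-z=(q-1)z$ and note that $D_q$ applied at the point $q^mz$ uses the step $q^{m+1}z-q^mz=q^mh$. The whole argument is operator algebra on the shift $\sigma f(z)=f(qz)$, so we work at a fixed $z\neq 0$. For $z=0$ the first formula is trivial, since $h=0$ and both sides equal $f(0)$.

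\textbf{Step 1: operator form of the first formula.} For $z\neq0$ the definition of $D_q$ gives $\sigma f(z)=f(z)+h\,D_qf(z)$. The subtlety is that $D_q$ and $\sigma$ do not commute with constant coefficients: $D_q(\sigma g)(z)=q\,(D_qg)(qz)$. So we prove, for every function $f$ and every $k$,
$$
f(q^kz)=\sum_{j=0}^k q^{j(j-1)/2}h^j\left[\begin{array}{cc} k\\ j\end{array}\right]_q D_q^jf(z).
$$
We induct on $k$; the case $k=0$ is trivial. For the inductive step write $f(q^{k+1}z)=F(q^kz)$ with $F=\sigma f$. Apply the hypothesis to $F$ and use $D_q^jF(z)=q^j(D_q^jf)(qz)$, which follows by iterating the commutation rule. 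Then expand each $(D_q^jf)(qz)=D_q^jf(z)+hD_q^{j+1}f(z)$ and collect the coefficients of $h^jD_q^jf(z)$. What remains is the $q$-Pascal rule
$$
\left[\begin{array}{cc} k+1\\ j\end{array}\right]_q=q^{j}\left[\begin{array}{cc} k\\ j\end{array}\right]_q+\left[\begin{array}{cc} k\\ j-1\end{array}\right]_q,
$$
together with the exponent bookkeeping $q^{j(j-1)/2}q^j$ versus $q^{(j-1)(j-2)/2}q^{j-1}$. Both of these are routine.

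\textbf{Step 2: inversion.} For the second formula we use induction as well. Alternatively, the first formula says that the vector $(f(q^kz))_k$ equals a lower-triangular matrix $A_{kj}=q^{j(j-1)/2}h^j\left[\begin{array}{cc} k\\ j\end{array}\right]_q$ applied to $(D_q^jf(z))_j$. Factor $A=B\,\mathrm{diag}(q^{j(j-1)/2}h^j)$ with $B_{kj}=\left[\begin{array}{cc} k\\ j\end{array}\right]_q$. The needed inverse is then $B^{-1}_{kj}=(-1)^{k-j}q^{(k-j)(k-j-1)/2}\left[\begin{array}{cc} k\\ j\end{array}\right]_q$, which is the $q$-binomial inversion. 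To verify it, we check
$$
\sum_{j=i}^k(-1)^{k-j}q^{(k-j)(k-j-1)/2}\left[\begin{array}{cc} k\\ j\end{array}\right]_q\left[\begin{array}{cc} j\\ i\end{array}\right]_q=\delta_{ki}.
$$
After factoring out $\left[\begin{array}{cc} k\\ i\end{array}\right]_q$, this reduces to $\sum_{m=0}^{N}(-1)^mq^{m(m-1)/2}\left[\begin{array}{cc} N\\ m\end{array}\right]_q=0$ for $N\ge1$, where $N=k-i$ and we reindex with $m=k-j$. That identity is the Gauss/$q$-binomial theorem $\prod_{i=0}^{N-1}(1+q^ix)=\sum_m q^{m(m-1)/2}\left[\begin{array}{cc} N\\ m\end{array}\right]_qx^m$ evaluated at $x=-1$. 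Dividing by $q^{k(k-1)/2}h^k$, which is legitimate since $z\ne0$ and $q\neq 1$, gives the stated expression for $D_q^kf(z)$.

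A more direct alternative for the second formula is to induct on $k$ using $D_q^{k+1}f(z)=\bigl(D_q^kf(qz)-D_q^kf(z)\bigr)/h$. Here $D_q^kf(qz)$ has step $qh$, so it picks up the factor $q^{-k}h^{-k}q^{-k(k-1)/2}$. This route again reduces to the $q$-Pascal rule.

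The main obstacle is Step 1's non-commutation $D_q\sigma=q\,\sigma D_q$. Getting the powers $q^j$ right there is what produces the factor $q^{j(j-1)/2}$, and I would check the case $k=2$ explicitly to calibrate the exponents before writing the general inductive step.
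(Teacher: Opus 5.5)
Your proof is correct. The paper itself does not prove this lemma: it recalls it from Bangerezako's notes (pp.~12--13) and uses it as a black box in the proof of Theorem~\ref{zero.theorem}. So your proposal is a self-contained replacement for that citation, not an alternative to an argument in the text.

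Your steps all check out.
\begin{itemize}
\item \textbf{First formula.} Iterating $D_q\sigma=q\,\sigma D_q$ gives $D_q^j(\sigma f)(z)=q^j(D_q^jf)(qz)$. After expanding $(D_q^jf)(qz)=D_q^jf(z)+hD_q^{j+1}f(z)$, the coefficient of $h^jD_q^jf(z)$ is $q^{j(j-1)/2}\bigl(q^j\left[\begin{array}{c}k\\ j\end{array}\right]_q+\left[\begin{array}{c}k\\ j-1\end{array}\right]_q\bigr)$, because $(j-1)(j-2)/2+(j-1)=j(j-1)/2$. The $q$-Pascal rule you quote then closes the induction.
\item \textbf{Inversion.} The product identity for $q$-binomials reduces $B^{-1}B=I$ to $\prod_{i=0}^{N-1}(1-q^i)=0$ for $N\ge1$, as you say.
\item \textbf{Direct-induction alternative.} This also closes. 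The new coefficient of $f(q^jz)$ is the old coefficient of $f(q^{j-1}z)$ minus $q^k$ times the old coefficient of $f(q^jz)$, and the same Pascal rule converts this into the claimed coefficient.
\end{itemize}

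Compared with the bare citation, your triangular-matrix viewpoint makes clear how the lemma is used in Theorem~\ref{zero.theorem}. For $z_0\neq0$, the vectors $(f(q^kz_0))_{0\le k\le n}$ and $(D_q^kf(z_0))_{0\le k\le n}$ are related by a lower-triangular matrix with nonzero diagonal entries $q^{k(k-1)/2}h^k$. Hence the entries with $k<n$ vanish for one vector if and only if they vanish for the other. Once they do, the entries with $k=n$ differ by the nonzero factor $q^{n(n-1)/2}h^n$.

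Two small points of hygiene remain.
\begin{itemize}
\item When $q\neq1$ is a root of unity, the quotient $[k]_q!/([j]_q!\,[k-j]_q!)$ can degenerate to $0/0$. The coefficients should then be read as Gaussian polynomials. This does not affect your proof, since every identity you use (Pascal, the product rule, Gauss's binomial theorem) is a polynomial identity in $q$.
\item The case $z=0$ of the first formula needs $D_q^jf(0)$ to be finite. This holds when $f$ is analytic at $0$.
\end{itemize}
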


\bigskip
\noindent
\emph{Proof of Theorem \ref{zero.theorem}}.
It is easy to get our assertion when $q=1$. In the following, we assume that $q\neq 1$.
Let $z_0\neq 0$ be a zero of $f$ with $q$-weight $n$ in a domain $G$, and $k\in\N$. Since $D_q^k f(z_0)$ vanish for $0\leq k<n$, by using Lemma~\ref{lm2}, we have
   $$ f(q^kz_0)=\sum_{j=0}^{k} q^{j(j-1)/2}(qz_0-z_0)^j
   \left[\begin{array}{cc}
   k\\j
   \end{array}\right]_q
   D_q^jf(z_0)=0.$$
In addition, we found that $f(q^nz_0)=q^{n(n-1)/2}D_q^nf(z_0)(qz_0-z_0)^n\neq 0$.
Hence, $f$ is of the form \eqref{fzero.eq}.
If $f$ is of the form \eqref{fzero.eq}, we see that $f(q^kz_0)=0$ for all $0\leq k<n$, but
$f(q^nz_0)\neq 0$. From Lemma \ref{lm2}, we have
$$
 D_q^kf(z_0) = \frac{1}{q^{k(k-1)/2}(qz_0-z_0)^k}\sum_{j=0}^k (-1)^{k-j}q^{(k-j)(k-j-1)/2}
	\left[\begin{array}{cc}
	k\\j
	\end{array}\right]_q
	f(q^jz_0)=0
$$
for any $z \in\C\setminus\{0\}$ and $0\leq k<n$, but $D_q^nf(z_0)\neq 0$. Therefore,
$z_0$ is a zero of $f(z)$ with $q$-weight $n$ in $G$.

If $z_0=0$, then $z_0$ is a zero of $f$ with $q$-weight $n$ if and only if $z_0$ is a zero of $f$ with multiplicity $n$. This completes the proof of the assertion. $\hfill\square$

\bigskip
\noindent

Obviously, the analytic function $f$ in \eqref{fzero.eq} reduces to $f(z)=(z-z_0)^ng(z)$, where $g$ is analytic in $z_0$ and $g(z_0)\neq 0$, as $q\to 1$.
From Theorem \ref{zero.theorem}, we have the following corollary.

\begin{corollary}\label{n-1.cor}
Let $f$ be analytic in the domain $G\subset\C$, and let $n\in\N^+$ and $q\in\C\setminus\{0\}$. Suppose that $z_0, qz_0,\ldots, q^nz_0\in G$.
If $z_0$ is an $a$-point of $f$ with $q$-weight $n$ in $G$, then $z_0$ is a zero of $D_q f$ with $q$-weight $n-1$.
\end{corollary}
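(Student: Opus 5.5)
The plan is to reduce to the zero case and then read the claim directly off the definition of $q$-weight. The corollary follows from Theorem~\ref{zero.theorem}, but the cleanest route is the definition itself. Put $F=f-a$. Since $D_q$ kills constants ($D_q a=(a-a)/(qz-z)=0$ for $z\neq0$, and likewise in the limit definition at $0$), we have $D_q^kF=D_q^kf$ for every $k\ge1$.

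By the definition of an $a$-point with $q$-weight $n$, this means $F(z_0)=0$, $D_q^kf(z_0)=0$ for $1\le k<n$, and $D_q^nf(z_0)\neq0$. Now set $h=D_qf$. Since $D_q^kh=D_q^{k+1}f$, the conditions become $D_q^jh(z_0)=0$ for $0\le j<n-1$ and $D_q^{n-1}h(z_0)=D_q^nf(z_0)\neq0$. This is exactly the statement that $z_0$ is a zero of $h$ with $q$-weight $n-1$. The point conditions are inherited: we need $z_0,\dots,q^{n-1}z_0\in G$, which is a subset of the assumed points.

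The only point needing care is that $h=D_qf$ must be analytic in a domain where the definition applies, so that the phrase ``zero of $D_qf$ with $q$-weight $n-1$'' is meaningful. For $z\neq0$ with $z,qz\in G$, the quotient $(f(qz)-f(z))/((q-1)z)$ is analytic. At $z=0$ the singularity is removable, with value $f'(0)$, which matches the paper's definition of $D_qf(0)$. I will state that we work on $G\cap q^{-1}G$, which contains $z_0,\dots,q^{n-1}z_0$, and which is open.

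As a cross-check via Theorem~\ref{zero.theorem}, for $z_0\neq0$ one can write $f-a=[z-z_0]_q^ng$ and compute $D_q$ using the $q$-product rule $D_q(uv)=D_qu\cdot v(qz)+u\,D_qv$. This gives
$$D_qf=[n]_q[z-z_0]_q^{n-1}g(qz)+[z-z_0]_q^nD_qg=[z-z_0]_q^{n-1}G_1(z),$$
and then one checks that $G_1(q^{n-1}z_0)=[n]_qg(q^nz_0)\neq0$. Even if $[n]_q=0$, which happens when $q$ is a root of unity, the definitional argument above still works. That is why I will prefer the definitional route. The main obstacle is only this domain and analyticity bookkeeping.
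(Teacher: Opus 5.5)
Your proof is correct, but it takes a different and more direct route than the paper.

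The paper's proof, for $z_0\neq0$, runs as follows:
\begin{itemize}
\item it invokes Theorem~\ref{zero.theorem} to write $f-a=[z-z_0]_q^nh$ with $h(q^nz_0)\neq0$;
\item it computes $D_qf=[z-z_0]_q^{n-1}H$, where $H(z)=\bigl(q^{n-1}(qz-z_0)h(qz)-(z-q^{n-1}z_0)h(z)\bigr)/(qz-z)$;
\item it checks that $H(q^{n-1}z_0)=[n]_qh(q^nz_0)\neq0$;
\item it applies the converse half of Theorem~\ref{zero.theorem}.
\end{itemize}
The case $z_0=0$ is dismissed as ordinary multiplicity. This is essentially your cross-check.

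Your main argument needs only two facts: $D_q$ annihilates constants, and $D_q^{k}(D_qf)=D_q^{k+1}f$ by the definition of the iterates. The corollary then becomes a pure index shift. You need no case split at $z_0=0$, no factorization theorem, no appeal to Lemma~\ref{lm2}, and no condition $[n]_q\neq0$.

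What the paper's computation buys is the explicit factorization $D_qf=[z-z_0]_q^{n-1}H$. That is the form in which the corollary is consumed later, e.g.\ in Lemma~\ref{gcd.lemma}. It also follows from your version combined with Theorem~\ref{zero.theorem}.

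Two minor remarks.
\begin{itemize}
\item Your worry about $[n]_q=0$ does not threaten the paper's computation. If $q\neq1$, $q^n=1$ and $z_0\neq0$, then $q^nz_0=z_0$. Yet the hypothesis forces, via Lemma~\ref{lm2}, $f(q^kz_0)=a$ for $0\le k<n$ and $f(q^nz_0)\neq a$. So that case is vacuous.
\item Your set $G\cap q^{-1}G$ is open but need not be connected, so strictly it is not a domain. This is harmless, and the paper glosses over the same point. For $z_0\neq0$ the relevant $q$-weights depend only on the values of $f$ at $z_0,\dots,q^nz_0$.
\end{itemize}
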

\begin{proof}
It is straightforward to verify the conclusion when $z_0=0$.
Suppose that $z_0\neq 0$ is a zero of $f-a$ with $q$-weight $n$ in $G\subset\C$, where $a\in\C$. According to Theorem \ref{zero.theorem}, there exists an analytic function $h(z)$ in $G$, such that
$$
f(z)-a=[z-z_0]_q^nh(z),
$$
where $h(q^nz_0)\ne0$. Moreover,
\begin{align*}\nonumber
     D_qf(z) &= \dfrac{[qz-z_0]_q^n h(qz)-[z-z_0]_q^nh(z)}{qz-z} \\
             &= [z-z_0]_q^{n-1}\dfrac{q^{n-1}(qz-z_0)h(qz)-(z-q^{n-1}z_0)h(z)}{qz-z}\\
             &=: [z-z_0]_q^{n-1}H(z).
\end{align*}
It is obvious that $ H(q^{n-1}z_0)= [n]_qh(q^nz_0)\neq 0 $, which implies that $z_0$ is a zero of $D_qf(z)$ with $q$-weight $n-1$. Hence, we proved our assertion.
\end{proof}

\section{$Q$-difference radical}

Let $P$ be a polynomial with degree $p$, and $z_1$ be a zero of $P$ with $q$-weight $n_1$ and $P(q^{-1}z_1)\neq 0$, where $q\in\C\setminus\{0\}$. Theorem~\ref{zero.theorem} states that there exists a polynomial $P_1$ with degree $p-n_1$ such that $P(z)=[z-z_1]_q^{n_1}P_1(z)$, where $P_1(q^{n_1}z_1)\neq 0$.
Let $z_2$ be a zero of $P_1$ with $q$-weight $n_2$ and $P_1(q^{-1}z_2)\neq 0$.
Then there exists a polynomial $P_2$ with degree $p-n_1-n_2$ such that $P_1(z)=[z-z_2]_q^{n_2}P_2(z)$, where $P_2(q^{n_2}z_2)\neq 0$.
Repeating this argument for finitely many times, we see that $P(z)$ can be written uniquely as
\begin{equation}\label{P.eq}
P(z)=A\prod_{j=1}^N[z-z_j]^{n_j}_q,
\end{equation}
where $A$ is a nonzero constant, and $p=n_1+\cdots+n_N$.
Note that it is possible that $z_j=z_k$ even though $j\ne k$ in \eqref{P.eq}.
We define the \emph{$q$-difference radical ${\rm rad}_q(P)$} by product of these linear factors, i.e.,
\begin{equation}\label{q-radical}
{\rm rad}_q(P)=\prod_{j=1}^N(z-z_j).
\end{equation}
From \eqref{P.eq} and \eqref{q-radical}, we see that $\lim_{q\to 1}{\rm rad}_q(P)={\rm rad}(P)$.

Suppose that $f$ and $g$ are entire functions, and suppose that $z_1\in\C$ is a zero of $f$ with $q$-weight $m$ and $z_2\in\C$ is a zero of $g$ with $q$-weight $n$.
Then from Theorem~\ref{zero.theorem}, there exist entire functions $F$ and $G$ such that
$f(z)=[z-z_1]_q^{m_1}F(z)$ and $g(z)=[z-z_2]_q^{n_1}G(z)$, where $1\leq m_1\leq m$ and $1\leq n_1\leq n$.
Note that $F$ and $G$ are dependent on $m_1$ and $n_1$ respectively. If for some $m_1$, $n_1$,
    $$
    f(z)g(z)=[z-z_0]_q^{m_1+n_1}F(z)G(z),
    $$
where $z_0$ is $z_1$ or $z_2$, then $z-z_0$ is called the \emph{common $q$-divisor} of $f$ and $g$, which is the $q$-analogue of classical common divisor. If $z_0=z_1$, then $z_2=q^{m_1}z_1$, and if $z_0=z_2$, then $z_1=q^{n_1}z_2$.

\begin{example}
Suppose that $q\in\C$ such that $0<|q|<1$, and
$f(z)=(z-1)(z-q)(z-q^2)$ and $g(z)=(z-q^2)(z-q^3)(z-q^4)$. Then $z-1$, $z-q$, $z-q^2$ are the common $q$-divisor of $f$ and $g$.
In addition, $z-q^2$ is the common divisor of $f$ and $g$.
If we write $f=[z-1]_q^3$ and $g=(z-q^2)[z-q^3]_q^2$ or $f=[z-1]^2_q(z-q^2)$ and $g=[z-q^2]_q^3$, then
    $$
    fg=[z-1]_q^5(z-q^2).
    $$
It shows that $z-1$ is a common $q$-divisor of $f$ and $g$. Here $m_1$=3 and $n_1=2$ or $m_1=2$ and $n_1=3$.
If $h(z)=(z-q^3)(z-q^4)$, then $z-1$, $z-q$, $z-q^2$ are the also common $q$-divisors of $f$ and $h$. But there is no common divisor of $f$ and $h$.
\end{example}

 If $f$ and $g$ do not have any nonconstant common $q$-divisors, then $f$ and $g$ are called \emph{relatively $q$-prime}.
For functions $f_1$, $f_2$, $\dots$, $f_k$, $k>2$, we call $f_1$, $f_2$, $\dots$, $f_k$ are relatively $q$-prime if any pair of $f_i$ and $f_j$, $1\leq i, j\leq k$, $i\ne j$ are relatively $q$-prime.

Let $P$ and $Q$ be nonconstant polynomials. In general, we have
$$
\deg({\rm rad}_q (PQ))\leq
\deg({\rm rad}_q (P))+\deg({\rm rad}_q (Q)).
$$
 By the definition of the relatively $q$-prime, if $P$ and $Q$ are relatively $q$-prime, then
    \begin{equation}\label{PQ.eq}
    \deg({\rm rad}_q (PQ))=\deg({\rm rad}_q (P))+\deg({\rm rad}_q (Q)).
    \end{equation}
On the other hand, even though the equality \eqref{PQ.eq} holds, $P$ and $Q$ are not always relatively $q$-prime. For example, consider $P(z)=(z-1)$ and $Q(z)=(z-1)(z-q)$, where $q\in\C$ such that $0<|q|<1$. Then ${\rm rad}_q (PQ)=(z-1)^2$, and ${\rm rad}_q (P)=z-1$, ${\rm rad}_q (Q)=z-1$, which gives the equality in \eqref{PQ.eq} holds. However, $P$ and $Q$ are not relatively $q$-prime. In fact, we can write $P(z)=[z-1]_q$, $Q(z)=[z-1]_q[z-q]_q$ and $P(z)Q(z)=[z-1]_q^{1+1}\cdot[z-1]_q$.

In the following, we denote the greatest common divisor of $f$ and $g$ by $\text{gcd}(f,g)$.
It is well known that if two entire functions $f$ and $g$ are relatively prime, then $\text{gcd}(f,f')$ and $\text{gcd}(g,g')$ are also relatively prime. We state the $q$-analogue of this result as follows.

\begin{lemma}\label{q-prime}
Suppose that $q\in\C\setminus\{0\}$, and $f$ and $g$ are relatively $q$-prime entire functions. Then ${\rm gcd}(f,D_q f)$
and ${\rm gcd}(g,D_q g)$ are also relatively $q$-prime. In addition, ${\rm gcd}(f,D_q f)$
and ${\rm gcd}(g,D_q g)$ are relatively prime.
\end{lemma}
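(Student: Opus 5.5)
We first pin down the zeros of ${\rm gcd}(f,D_qf)$ in terms of $q$-weights. Let $z_0\neq 0$ be a zero of $f$ with $q$-weight $n\geq 1$. By Theorem~\ref{zero.theorem} we can write $f(z)=[z-z_0]_q^n h(z)$ with $h(q^nz_0)\neq 0$. Corollary~\ref{n-1.cor} (with $a=0$) then gives $D_qf(z)=[z-z_0]_q^{n-1}H(z)$ with $H(q^{n-1}z_0)\neq 0$. Hence the factor $[z-z_0]_q^{n-1}$ divides both $f$ and $D_qf$.

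Running through the decomposition of $f$ into blocks $[z-z_j]_q^{n_j}$, as in \eqref{P.eq} (for entire $f$ the product is possibly infinite, but the argument is local), we aim to show the following. Every zero of ${\rm gcd}(f,D_qf)$ lies in the support of some block of $f$, namely among the points $z_j,qz_j,\dots,q^{n_j-1}z_j$. In particular, every zero of ${\rm gcd}(f,D_qf)$ is a zero of $f$, and every zero of ${\rm gcd}(g,D_qg)$ is a zero of $g$. The case $z_0=0$ is the classical one, since there $q$-weight coincides with multiplicity, and we treat it separately.

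\textbf{Relative $q$-primality.} Suppose, for contradiction, that ${\rm gcd}(f,D_qf)=:F_1$ and ${\rm gcd}(g,D_qg)=:G_1$ have a nonconstant common $q$-divisor $z-z_0$. By definition this means $F_1=[z-z_1]_q^{m_1}\tilde F$ and $G_1=[z-z_2]_q^{n_1}\tilde G$ with $z_2=q^{m_1}z_1$ (or symmetrically), so that $F_1G_1=[z-z_1]_q^{m_1+n_1}\tilde F\tilde G$. Since $F_1\mid f$ and $G_1\mid g$, we can write $f=[z-z_1]_q^{m_1}(\tilde F f/F_1)$ and $g=[z-z_2]_q^{n_1}(\tilde G g/G_1)$. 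Then $fg=[z-z_1]_q^{m_1+n_1}\cdot(\text{entire})$. This exhibits $z-z_1$ as a common $q$-divisor of $f$ and $g$, contradicting the hypothesis that $f$ and $g$ are relatively $q$-prime.

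\textbf{Main obstacle.} The delicate point is that the paper's definition of common $q$-divisor is phrased through the $q$-weights $m,n$ of $z_1,z_2$ as zeros of the functions themselves. So we must check that the exponents $m_1,n_1$ coming from $F_1,G_1$ are admissible exponents for $f,g$, i.e.\ $1\leq m_1\leq$ the $q$-weight of $z_1$ in $f$, and similarly for $g$. This follows because $[z-z_1]_q^{m_1}\mid F_1\mid f$: the points $z_1,\dots,q^{m_1-1}z_1$ are then zeros of $f$, so by Lemma~\ref{lm2} all $D_q^kf(z_1)$ vanish for $k<m_1$, and the $q$-weight of $z_1$ in $f$ is at least $m_1$. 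I would verify this carefully, including the case where $z_1=0$.

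\textbf{Relative primality.} Suppose $F_1$ and $G_1$ had a common zero $w$. By the first step, $w$ is a zero of both $f$ and $g$. Then $(z-w)\mid f$ and $(z-w)\mid g$, i.e.\ $f=[z-w]_q^1F$ and $g=[z-qw]_q^{0}\cdots$. More directly, a common ordinary divisor is a common $q$-divisor in the degenerate sense: take $m_1=1$ and use $g=(z-w)G$. Here I would need $w=q\cdot w$, which fails unless $w=0$, so instead I would argue via the shared block. Since $w$ is a zero of both, write $f=[z-w]_q F$ and $g=[z-w]_qG$. Relative $q$-primality of $f,g$ forbids $f$ from vanishing at $q^{-1}w$ in a way that pairs with $g$. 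The plan is to derive the contradiction by showing that $w$ is a zero of $D_qf$, which forces $qw$ to also be a zero of $f$ (when $w\neq0$, since $D_qf(w)=(f(qw)-f(w))/(qw-w)$). Hence $f$ vanishes at $w$ and $qw$ while $g$ vanishes at $w$. Writing $f=[z-w]_q^2\cdots$ and $g=(z-qw)\cdots$ would then need $g(qw)=0$. By symmetry $g$ also vanishes at $qw$, since $w$ is a zero of $D_qg$. So $f=[z-w]_q\cdot(\dots)$ and $g=[z-qw]_q\cdot(\dots)$, whence $fg=[z-w]_q^2\cdots$. This is a common $q$-divisor, which is the desired contradiction. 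For $w=0$ the classical argument applies.
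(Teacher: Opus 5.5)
Your proof is correct and follows essentially the paper's route. A common $q$-divisor (resp.\ a common zero $w$) of ${\rm gcd}(f,D_qf)$ and ${\rm gcd}(g,D_qg)$ is transported back to $f$ and $g$. In the second part this goes through $f(w)=D_qf(w)=0\Rightarrow f(qw)=0$, which the paper phrases via Corollary~\ref{n-1.cor} as $f=[z-z_0]_q^2F_3$, so that $f(w)=0=g(qw)$ exhibits $z-w$ as a common $q$-divisor.

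In the first part you are slightly more direct than the paper: you use only ${\rm gcd}(f,D_qf)\mid f$ together with your $q$-weight admissibility check, instead of raising the exponents to $m+1$, $n+1$ via the corollary. You should also delete the abandoned false starts (e.g.\ $g=[z-qw]_q^0\cdots$) from your last paragraph.
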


\begin{proof}
Assume that ${\rm gcd}(f,D_q f)$ and ${\rm gcd}(g,D_q g)$ have a common $q$-divisor.
There exist $z_1,z_2\in\C$, $m,n\in\N$ and entire functions $F,G$ such that
    \begin{equation}\label{fDf.eq}
    {\rm gcd}(f,D_q f)=[z-z_1]_q^mF(z),
    \end{equation}
and
    \begin{equation}\label{gDg.eq}
    {\rm gcd}(g,D_q g)=[z-z_2]_q^nG(z).
    \end{equation}
Without loss of generality, we suppose that $z_2=q^mz_1$. Then we have
    $$
    {\rm gcd}(f,D_q f){\rm gcd}(g,D_q g)=[z-z_1]^{m+n}_qF(z)G(z).
    $$
It follows from \eqref{fDf.eq}, \eqref{gDg.eq} and Corollary \ref{n-1.cor} that
    $$
    f(z)=[z-z_1]^{m+1}_qF_1(z),
    $$
where $F_1$ is an entire function, and
    $$
    g(z)=[z-z_2]^{n+1}_qG_1(z),
    $$
where $G_1$ is an entire function. Therefore, we have
    $$
    f(z)g(z)=[z-z_1]^{m+n+1}_q(z-q^mz_1)F_1(z)G_1(z).
    $$
It shows that $f$ and $g$ have a common $q$-divisor $z-z_1$, which is a contradiction
with our assumption. We thus proved the first assertion.

Suppose that ${\rm gcd}(f,D_q f)$ and ${\rm gcd}(g,D_q g)$ are not relatively prime.
We set $z-z_0$ is the common divisor of ${\rm gcd}(f,D_q f)$ and ${\rm gcd}(g,D_q g)$. Hence,
there exist entire functions $F_2, G_2$ such that
    $$
    {\rm gcd}(f,D_q f)=(z-z_0)F_2(z),
    $$
and
    $$
    {\rm gcd}(g,D_q g)=(z-z_0)G_2(z).
    $$
It yields by Corollary \ref{n-1.cor} that $f=[z-z_0]^2_qF_3$ and $g=[z-z_0]_q^2G_3$, where $F_3$ and $G_3$ are entire functions. It implies that $f$ and $g$ are not relatively shifting prime, which
is a contradiction to our assumption. We proved our Lemma \ref{q-prime}.
\end{proof}

Now let us state our main result in this paper as follows, which is $q$-difference analogue of the Stothers-Mason theorem.

\begin{theorem}\label{qabc.theorem}
Let $q\in\C\setminus\{0\}$ such that $|q|\neq 1$, and $a$, $b$ and $c$ be relatively $q$-prime polynomials such that
    $$
    a+b=c
    $$
and such that $a$, $b$ and $c$ are not all constants. Then
    \begin{equation}\label{diff-abc.eq}
    \max\{\deg(a),\deg(b),\deg(c)\}\leq \deg({{\rm rad}_q(abc)})-1.
    \end{equation}
\end{theorem}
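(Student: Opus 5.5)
We follow the classical Wronskian proof of the Stothers--Mason theorem, with the derivative replaced by the Jackson operator $D_q$. Since $|q|\neq 1$, $q$ is not a root of unity, so $[n]_q\neq 0$ for every $n\in\N^+$. Hence $D_q$ lowers the degree of a nonconstant polynomial by exactly one and kills only constants.

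First we reduce. Relative $q$-primality implies ordinary coprimality, since a common linear factor $(z-z_0)$ gives $fg=[z-z_0]_q^2\cdots$. Hence at most one of $a,b,c$ is constant. Set
\[
W=a\,D_qb-b\,D_qa .
\]
From $a+b=c$ we get $D_qa+D_qb=D_qc$, so $W=a\,D_qc-c\,D_qa=c\,D_qb-b\,D_qc$, exactly as for the ordinary Wronskian. We claim $W\not\equiv 0$. Otherwise $D_q(a/b)=\frac{b(z)D_qa-a(z)D_qb}{b(qz)b(z)}\cdot(\text{const})$ vanishes, and a rational function $R$ with $R(qz)=R(z)$ must be constant when $q$ is not a root of unity. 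Then $a=\lambda b$, and coprimality forces both $a$ and $b$, hence $c$, to be constant, a contradiction. Without loss of generality take $\deg c=\max\{\deg a,\deg b,\deg c\}$; the other cases follow by symmetry, using the alternative expressions for $W$. The three expressions also give $\deg W\le \deg a+\deg b-1$ and its analogues.

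Next we use the $q$-factorizations \eqref{P.eq}. Write
\[
a=A\prod_j[z-z_j]_q^{n_j},
\]
and similarly for $b$ and $c$. For each block $[z-z_j]_q^{n_j}$ of $a$, Corollary~\ref{n-1.cor} (through Theorem~\ref{zero.theorem}) shows that $D_qa$ is divisible by $[z-z_j]_q^{n_j-1}=(z-z_j)(z-qz_j)\cdots(z-q^{n_j-2}z_j)$. Since $a$ has this factor too, $W=a\,D_qb-b\,D_qa$ is divisible by it as well. Thus $W$ picks up $n_j-1$ linear factors per block, and the total over the blocks of $a$ is $\deg a-\deg{\rm rad}_q(a)$. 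The same holds for $b$ (using the same form of $W$) and for $c$ (using $W=a\,D_qc-c\,D_qa$).

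The crucial step, and the main obstacle, is to show that all these factors, collected over all blocks of $a$, $b$ and $c$, divide $W$ jointly. That is, we need
\[
\frac{a}{{\rm rad}_q(a)}\cdot\frac{b}{{\rm rad}_q(b)}\cdot\frac{c}{{\rm rad}_q(c)} \;\Big|\; W,
\]
where $a/{\rm rad}_q(a)$ denotes $\prod_j[z-qz_j]_q^{n_j-1}$ up to index shift. Two difficulties arise. Blocks belonging to different polynomials could overlap in the points $q^kz_j$. Blocks within one polynomial may also overlap, since \eqref{P.eq} allows $z_j=z_k$. The tool here is relative $q$-primality: if a block of $a$ and a block of $b$ shared points, we expect to be able to concatenate them into a longer $q$-string, producing a common $q$-divisor. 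This is the same mechanism as in the proof of Lemma~\ref{q-prime}, so we would mimic that argument, merging overlapping strings and deriving a contradiction. For overlaps within a single polynomial, we would argue inductively along the construction of \eqref{P.eq}. We would apply Corollary~\ref{n-1.cor} successively to the cofactors $P_1,P_2,\dots$ and track multiplicities, checking that the multiplicity of each point $w$ in $W$ is at least the number of blocks of $a$ containing $w$ minus the number of block-ends at $w$. Doing this bookkeeping carefully is where most of the work lies.

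Granting the divisibility, comparing degrees with $\deg W\le\deg a+\deg b-1$ gives
\[
\deg a+\deg b+\deg c-\deg{\rm rad}_q(a)-\deg{\rm rad}_q(b)-\deg{\rm rad}_q(c)\le \deg a+\deg b-1 .
\]
Hence
\[
\deg c\le \deg{\rm rad}_q(a)+\deg{\rm rad}_q(b)+\deg{\rm rad}_q(c)-1 .
\]
Finally, \eqref{PQ.eq}, applied twice using pairwise relative $q$-primality, identifies the right-hand side with $\deg{\rm rad}_q(abc)-1$. This gives \eqref{diff-abc.eq}.
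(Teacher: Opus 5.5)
Your overall strategy is the paper's: compare $W=a\,D_qb-b\,D_qa$ with $\deg W\le\deg a+\deg b-1$. But the step you call crucial is only ``granted'', and it is the whole content of the proof.

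Moreover, the divisibility as you display it is false. With ${\rm rad}_q(a)=\prod_j(z-z_j)$ built from block starting points, $a/{\rm rad}_q(a)=\prod_j[z-qz_j]_q^{n_j-1}$ keeps the \emph{last} points of the blocks, and these need not divide $W$. In the paper's sharp example $a=(z-1)(z-q)$, $b=-(z+1)(z+q)$, $c=-2(1+q)z$, one finds $W=2q(1+q)(z^2-1)$, which is not divisible by $(z-q)(z+q)$.

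The right object is what your Corollary step actually produced, $\prod_j[z-z_j]_q^{n_j-1}$. The missing idea is that this is exactly $\gcd(a,D_qa)$:
\begin{itemize}
\item From $(q-1)z\,D_qa(z)=a(qz)-a(z)$ you get $\mathrm{ord}_w\gcd(a,D_qa)=\min\{\mathrm{ord}_wa,\mathrm{ord}_{qw}a\}$ for $w\neq0$, and $\mathrm{ord}_0a-1$ at $w=0$ if $a(0)=0$.
\item Write $m_k$ for the multiplicity of $a$ at $q^kz_0$. Then $\deg a-\deg\gcd(a,D_qa)$ is $\sum_k[m_k-m_{k+1}]^{+}$ summed over the $q$-orbits, plus $1$ if $a(0)=0$.
\item That count is the number of blocks in \eqref{P.eq}, i.e.\ $\deg{\rm rad}_q(a)$, so no inductive bookkeeping along the construction is needed.
\end{itemize}
Each gcd trivially divides $W$. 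By Lemma~\ref{q-prime} the three gcds are pairwise coprime in the ordinary sense, since a common zero $w$ of $\gcd(a,D_qa)$ and $\gcd(b,D_qb)$ gives $a(w)=0=b(qw)$. Hence their product divides $W$; this is how the paper closes the argument, and you need not re-derive any merging argument.

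Second, your proof that $W\not\equiv0$ rests on ``relative $q$-primality implies coprimality''. That is false for the paper's definition: a common zero $z_0\neq0$ gives $(z-z_0)^2$, not $[z-z_0]_q^2=(z-z_0)(z-qz_0)$. For example, $a=b=z-1$, $c=2(z-1)$ satisfy all the hypotheses and have $W\equiv0$.

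This case must be treated separately, as the paper does:
\begin{itemize}
\item Here $b=\lambda a$ and $c$ is also a constant multiple of $a$.
\item Relative $q$-primality of $a$ and $\lambda a$ forbids $a(w)=0=a(qw)$, and in particular forces $a(0)\neq0$.
\item So every block has length one, and $\deg{\rm rad}_q(abc)=3\deg a$.
\item The bound becomes $\deg a\le3\deg a-1$, which holds since $\deg a\ge1$.
\end{itemize}
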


\begin{proof}
Without loss of generality, let us assume that $\deg(c)=\max\{\deg(a),\deg(b),\deg(c)\}$.
Since $a+b=c$, we have $D_qa+D_qb=D_qc$. Multiplying
the first equation by $D_qa$, the second by $a$, and then subtracting them, we obtain
    \begin{equation}\label{ab.eq}
    bD_qa-aD_qb=cD_qa-aD_qc.
    \end{equation}
Suppose that $bD_qa-aD_qb\not\equiv 0$. It follows from \eqref{ab.eq} that
${\rm gcd}(a,D_qa)$, ${\rm gcd}(b,D_qb)$ and ${\rm gcd}(c,D_qc)$ all divide
$bD_qa-aD_qb$. Since $a,b,c$ are relatively $q$-prime polynomials, from Lemma \ref{q-prime}, ${\rm gcd}(a,D_qa)$, ${\rm gcd}(b,D_qb)$ and ${\rm gcd}(c,D_qc)$ are relatively $q$-prime polynomials, which are the factors of $bD_qa-aD_qb$. Therefore,
    $$
    \deg({\rm gcd}(a,D_qa))+\deg({\rm gcd}(b,D_qb))+\deg({\rm gcd}(c,D_qc))
    \leq \deg(a)+\deg(b)-1.
    $$
We add $\deg(c)$ to both sides, and use Corollary \ref{n-1.cor} and \eqref{PQ.eq},
    \begin{equation*}
    \begin{split}
    \deg(c)\leq& \big\{\deg(a)-\deg({\rm gcd}(a,D_qa))\big\}+\big\{\deg(b)-\deg({\rm gcd}(b,D_qb))\big\}\\
    &+\big\{\deg(c)-\deg({\rm gcd}(c,D_qc))\big\}-1\\
    =&\deg({\rm rad}_q(a))+\deg({\rm rad}_q(b))+\deg({\rm rad}_q(c))-1\\
    =&\deg({\rm rad}_q(abc))-1.
    \end{split}
    \end{equation*}
Therefore, we prove our assertion when $bD_qa-aD_qb\not\equiv 0$. In the following, we proceed to prove our theorem when $bD_qa-aD_qb\equiv 0$. Hence, we have
    $$
    \frac{D_qa}{a}=\frac{D_qb}{b}=\frac{D_qc}{c}=Q(z),
    $$
where $Q(z)$ is a rational function. Polynomials $a,b,c$ are solutions of linear $q$-difference equations
    $$
    f(qz)=(1+(q-1)zQ(z))f(z).
    $$
There exist $\pi_1,\pi_2$ such that $b=\pi_1a$ and $c=\pi_2a$, where $\pi_j(qz)=\pi_j(z)$ for $j=1,2$, see e.g., \cite{AAIM}. Since $b$ and $c$ are polynomials, the only possibility is $\pi_1$ and $\pi_2$ are constants. It yields that all zeros of $a,b,c$ are of simple $q$-weight. Hence,
    $$
    \deg(a)+\deg(b)+\deg(c)=\deg({\rm rad}_q(abc)).
    $$
It is easy to see that \eqref{diff-abc.eq} holds. We proved our assertion.
\end{proof}

Theorem \ref{qabc.theorem} can be seen as a generalization of Stothers-Mason theorem. As $q\to 1$,
Theorem \ref{qabc.theorem} is the classical Stothers-Mason theorem.
The following example shows that the assertion in Theorem \ref{qabc.theorem} is sharp.

\begin{example}
Let $a=[z-1]_q^2$, $b=-[z+1]_q^2$ and $c=-2(q+1)z$. We see that $a$, $b$
and $c$ are relatively $q$-prime polynomials satisfying $a+b=c$. Moreover,
it is shown that $\max\{\deg(a),\deg(b),\deg(c)\}=2$, ${{\rm rad}_q(abc)}=z(z^2-1)$,
and $\deg({{\rm rad}_q(abc)})=3$.
\end{example}

\section{Extension of the Stothers-Mason theorem with $q$-difference radical}

We write a polynomial $P$ in the form \eqref{P.eq}, i.e., $P(z)=A\prod_{j=1}^N[z-z_j]_q^{n_j}$.
The $q$-radical of truncation level $\mu\in\N$ for a polynomial $P(z)$ is denoted by
    \begin{equation}\label{radq.eq}
    {\rm rad}_q^\mu(P)={\rm gcd}\bigg(\prod_{j=1}^N[z-z_j]_q^{n_j}, \ \prod_{j=1}^N[z-z_j]_q^{\mu}\bigg).
    \end{equation}
Obviously, it is a generalization of $q$-difference radical. When $\mu=1$ in \eqref{radq.eq}, it reduces to the $q$-difference radical of $P$ given in \eqref{q-radical}.
We note that ${\rm rad}_q^\mu(P)={\rm rad}_q (P)$ holds for any $\mu\geq 2$ if and only if all the zeros of $P$ are of simple $q$-weight.
It follows from \eqref{radq.eq},
\begin{equation}\label{qmu.eq}
\deg({\rm rad}_q^\mu(P))=\deg\left(\prod_{j=1}^N[z-z_j]_q^{\min(n_j,\mu)}\right)=\sum_{j=1}^N \min(n_j,\mu).
\end{equation}
In addition, we have an inequality
    \begin{equation}\label{small.eq}
   \deg\left( {\rm rad}_q^\mu(P)\right)\leq \mu\cdot\deg \left({\rm rad}_q(P)\right).
    \end{equation}

The following theorem extends Theorem~\ref{qabc.theorem} for $m+1$ polynomials, where $m\in\N$, $m\geq2$. We state it as follows.
\begin{theorem}\label{fm.theorem}
Let $m\in \N$, $m\geq2$ and let $f_1,\ldots,f_{m+1}$  be pairwise relatively $q$-prime polynomials with
    $
    \min_{1\leq i\leq m+1} \{\deg f_i\}\geq m-1
    $
satisfying the following functional equation
    $$
    f_1+ \cdots + f_m=f_{m+1},
    $$
and such that $f_1,\dots,f_{m}$ are linearly independent over $\C$. Then
    \begin{equation}\label{q-diffMasonineq}
    \begin{split}
    \max_{1\leq i\leq m+1}\{\deg f_i\}&\leq \deg({\rm rad}_q^{m-1}(f_1f_2\cdots f_{m+1}))-\frac{1}{2}m(m-1)\\
    &\leq (m-1)\deg({\rm rad} _q(f_1f_2\cdots f_{m+1}))-\frac{1}{2}m(m-1).
    \end{split}
    \end{equation}
\end{theorem}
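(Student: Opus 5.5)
We follow the classical Wronskian proof of the generalized Mason theorem (Brownawell--Masser / Voloch), with the ordinary Wronskian replaced by the $q$-Casoratian
$$
W=W_q(f_1,\ldots,f_m)=\det\big(D_q^{k}f_i\big)_{0\le k\le m-1,\ 1\le i\le m}.
$$
Since $f_1,\dots,f_m$ are linearly independent over $\C$ and are polynomials, hence not annihilated by nontrivial $q$-periodic combinations (as in the last step of the proof of Theorem~\ref{qabc.theorem}, a $q$-periodic rational function is constant when $|q|\neq1$), we get $W\not\equiv0$. By multilinearity and $f_{m+1}=f_1+\cdots+f_m$, $W$ equals, up to sign, the Casoratian of any $m$ of the $m+1$ functions $f_1,\dots,f_{m+1}$. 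Since $D_q$ lowers degree by exactly one, each term of the determinant $W_q(f_1,\ldots,f_m)$ has degree at most $\sum_{i\le m}\deg f_i-\frac12 m(m-1)$, so
$$
\deg W\le \sum_{i=1}^{m}\deg f_i-\tfrac12 m(m-1).
$$
The hypothesis $\min\deg f_i\ge m-1$ keeps all the $D_q^k f_i$ with $k\le m-1$ legitimate, with no degenerate zero rows.

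The divisibility step comes next. For each $i\le m+1$, write $f_i=A_i\prod_j[z-z_{ij}]_q^{n_{ij}}$ as in \eqref{P.eq}. Iterating Corollary~\ref{n-1.cor}, a block $[z-z_{ij}]_q^{n_{ij}}$ with $n_{ij}\ge m$ has the factor $[z-z_{ij}]_q^{n_{ij}-k}$ dividing $D_q^kf_i$ for all $k\le m-1$. In particular $[z-z_{ij}]_q^{n_{ij}-(m-1)}$ divides the whole column of $f_i$, and hence divides $W$. Define
$$
E_i=\prod_j [z-z_{ij}]_q^{\max(n_{ij}-(m-1),0)},
$$
so that $\deg E_i=\deg f_i-\deg{\rm rad}_q^{m-1}(f_i)$ by \eqref{qmu.eq}. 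Since $W$ is also (up to sign) the Casoratian of any $m$-subset of the $f_i$, each $E_i$, $1\le i\le m+1$, divides $W$. The key claim is that $\prod_{i=1}^{m+1}E_i$ divides $W$. For this we use pairwise relative $q$-primeness, in the spirit of Lemma~\ref{q-prime}: $E_i$ and $E_j$ can share no linear factor, and the $q$-strings of different $f_i$ cannot overlap so as to form a common $q$-divisor. Hence
$$
\sum_{i=1}^{m+1}\deg E_i\le\deg W\le \sum_{i=1}^{m}\deg f_i-\tfrac12m(m-1).
$$

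To conclude, assume without loss of generality that $\deg f_{m+1}$ is maximal; the symmetry of $W$ under swapping $f_{m+1}$ into the set lets us bound $\deg W$ by the sum over whichever $m$ functions omit the one of maximal degree. Substituting $\deg E_i=\deg f_i-\deg{\rm rad}_q^{m-1}(f_i)$ and cancelling gives
$$
\deg f_{m+1}\le\sum_i\deg{\rm rad}_q^{m-1}(f_i)-\tfrac12m(m-1).
$$
Relative $q$-primeness then gives additivity of ${\rm rad}_q^{m-1}$ over the product, as in \eqref{PQ.eq}, which yields the first inequality of \eqref{q-diffMasonineq}. The second inequality is \eqref{small.eq} with $\mu=m-1$.

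The main obstacle is the coprimality/divisibility step, i.e.\ showing that $\prod E_i$ divides $W$. Distinct $f_i$ can have zeros lying on the same $q$-spiral (for example, $z_0$ for one function and $qz_0$ for another), so ordinary coprimality of the $E_i$ is what is actually needed. I would derive this from relative $q$-primeness exactly as in the second part of Lemma~\ref{q-prime}: a common linear factor $z-w$ of $E_i$ and $E_j$ forces $[z-w]_q^{2}$-type blocks in both $f_i$ and $f_j$, hence a common $q$-divisor, which contradicts the hypothesis.
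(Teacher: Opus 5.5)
Your proposal is correct and follows the paper's own route: the $q$-Casoratian $W_q(f_1,\ldots,f_m)$ and its nonvanishing, invariance under replacing a column by $f_{m+1}$, divisibility by each $\gcd(f_j,D_qf_j,\ldots,D_q^{m-1}f_j)$ and hence by their product, the degree count $\sum_{j\neq h}\deg f_j-\frac12 m(m-1)$ with $h$ the index of maximal degree, and finally additivity of ${\rm rad}_q^{m-1}$ together with \eqref{small.eq}. The one difference is that you justify the product-divisibility step, via ordinary coprimality of the $E_i$ deduced as in the second half of Lemma~\ref{q-prime}, whereas the paper only asserts it in one line from pairwise relative $q$-primeness.
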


In order to prove Theorem \ref{fm.theorem}, we need some lemmas below. Let $[x]^{+}=\max\{x,0\}$ for $x\in \mathbb{R}$.

\begin{lemma}\label{gcd.lemma}
Let $n\in \mathbb{N}$ and let $P$ be a polynomial with $\deg P \ge n$ represented by \eqref{P.eq}. Then
    \begin{equation}\label{deg.gcd1}
    \deg(\gcd(P,D_qP,...,D_q^nP))=\sum_{j=1}^N [n_j-n]^{+}
    \end{equation}
and
    $$
    \deg(P)-\deg(\gcd(P,D_qP,...,D_q^nP))=\deg({\rm rad} _q^{n}(P)).
    $$
\end{lemma}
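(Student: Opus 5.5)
The plan is to compute the gcd explicitly from the factorization \eqref{P.eq}, $P(z)=A\prod_{j=1}^N[z-z_j]_q^{n_j}$, and to compare it with the candidate
$$
R(z)=\prod_{j=1}^N[z-z_j]_q^{[n_j-n]^{+}}, \qquad \deg R=\sum_{j=1}^N[n_j-n]^{+}.
$$
The second identity of the lemma is then immediate: $n_j-[n_j-n]^{+}=\min(n_j,n)$, so $\deg P-\deg R=\sum_j\min(n_j,n)$, which equals $\deg({\rm rad}_q^n(P))$ by \eqref{qmu.eq}. Everything therefore reduces to proving \eqref{deg.gcd1}, i.e.\ $\gcd(P,D_qP,\dots,D_q^nP)=R$ up to a constant.

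\emph{Step 1: $R$ divides every $D_q^kP$, $0\le k\le n$.} The case $q=1$ is classical, so assume $q\neq1$. For $z\neq0$ I will use the second formula of Lemma~\ref{lm2}, which writes $D_q^kP(z)$ as a linear combination of $P(z),P(qz),\dots,P(q^kz)$ divided by $q^{k(k-1)/2}(q-1)^kz^k$. For $0\le i\le k\le n$,
$$
P(q^iz)=A\prod_j q^{in_j}\prod_{l=0}^{n_j-1}(z-q^{l-i}z_j).
$$
For each $j$ with $n_j>n$, the inner product contains the factors $z-q^{m}z_j$ for $m=0,1,\dots,n_j-1-i$. Since $i\le n$, this range includes $m=0,\dots,n_j-n-1$, and those factors form exactly $[z-z_j]_q^{n_j-n}$.

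Taking the product over $j$, the whole multiset of linear factors of $R$ is a sub-multiset of the factors of $P(q^iz)$. Hence $R\mid P(q^iz)$ as polynomials, even when some $z_j$ coincide or chains overlap. It follows that $R$ divides $z^kD_q^kP$. The factor $z^k$ is harmless: either $R(0)\ne0$, or one treats a root at $0$ separately using the fact, noted earlier in the paper, that $q$-weight at $0$ is ordinary multiplicity. Since $P$ is itself one of the terms, this gives $R\mid\gcd(P,D_qP,\dots,D_q^nP)$, and hence $\ge$ in \eqref{deg.gcd1}.

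\emph{Step 2: the reverse inequality.} This is the main obstacle. Write $P=RS$ with
$$
S(z)=A\prod_j\big[z-q^{[n_j-n]^{+}}z_j\big]_q^{\min(n_j,n)}.
$$
I must show that no linear factor $z-w$ of $S$ divides $\gcd(P,D_qP,\dots,D_q^nP)/R$. My first attempt is to evaluate along $q$-chains. Suppose $(z-w)R$ divides $D_q^kP$ for all $k\le n$. By the first formula of Lemma~\ref{lm2}, applied at the appropriate starting point of a chain, the values $P(q^iw')$ for $i\le n$ are then forced to vanish to the orders prescribed by $(z-w)R$. By Theorem~\ref{zero.theorem}, this would produce a chain $[z-z_j]_q^{n_j+1}$ dividing $P$, or a chain that starts at $q^{-1}z_j$.

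The greedy construction of \eqref{P.eq} excludes both possibilities. There, each $z_j$ was chosen with $P_{j-1}(q^{-1}z_j)\ne0$, and the weight $n_j$ was taken maximal, i.e.\ $P_j(q^{n_j}z_j)\neq0$. So we obtain a contradiction. Making this bookkeeping rigorous when several chains share points is the delicate part. I would organize it by an induction on $N$: peel off the first block $[z-z_1]_q^{n_1}$ and use a $q$-Leibniz rule,
$$
D_q(fg)(z)=f(qz)D_qg(z)+g(z)D_qf(z),
$$
to control how the remaining factor $P_1$ contributes to the gcd.
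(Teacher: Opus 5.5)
\textbf{Verdict: genuine gap.} Step~2, the upper bound $\deg\gcd(P,D_qP,\dots,D_q^nP)\le\sum_j[n_j-n]^+$, is only sketched. The contradiction you sketch for it does not work.

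Your Step~1 is correct. It is also more careful than the paper's argument. The paper applies Corollary~\ref{n-1.cor} one block at a time and never explains why the product of the pieces $[z-z_j]_q^{n_j-n}$ divides the gcd when chains overlap. Your sub-multiset argument for $R\mid P(q^iz)$ settles this.

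For Step~2, the paper itself only asserts that blocks with $n_j\le n$ ``never contribute'', so this is exactly where real work is needed. The greedy conditions $P_{j-1}(q^{-1}z_j)\neq0$ and $P_j(q^{n_j}z_j)\neq0$ concern the successive quotients, not $P$ itself. For example, $P=(z-1)^2(z-q)=[z-1]_q^2\,[z-1]_q^1$ has $n_2=1$, yet $[z-1]_q^{n_2+1}=(z-1)(z-q)$ divides $P$. So producing a chain $[z-z_j]_q^{n_j+1}\mid P$ contradicts nothing. The Leibniz-rule induction you propose is not carried out either.

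The missing idea is to use the first formula of Lemma~\ref{lm2} for a global multiplicity bound, not a chain-by-chain contradiction.
\begin{itemize}
\item Since $P(q^iz)$ is a $\C[z]$-linear combination of $P,D_qP,\dots,D_q^iP$, every common divisor of $P,\dots,D_q^nP$ divides $P(q^iz)$ for $0\le i\le n$.
\item Hence its order at any $w\neq0$ is at most $\mu_P(w):=\min_{0\le i\le n}\mathrm{ord}_{q^iw}P$.
\item It remains to show $\sum_{w\neq0}\mu_P(w)=\sum_{z_j\neq0}[n_j-n]^+$, by induction on $N$.
\end{itemize}
For the induction, note that the first block $I=\{z_1,\dots,q^{n_1-1}z_1\}$ is a maximal run of zeros of $P$, because $P(q^{-1}z_1)\neq0\neq P(q^{n_1}z_1)$. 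Compare each window $\{w,\dots,q^nw\}$ for $P$ and for $P_1$:
\begin{itemize}
\item A window that meets $I$ without lying inside it contains $q^{-1}z_1$ or $q^{n_1}z_1$, so $\mu_P(w)=\mu_{P_1}(w)=0$.
\item A window lying inside $I$ gives $\mu_P=\mu_{P_1}+1$, and there are exactly $[n_1-n]^+$ such windows.
\item Every other window gives $\mu_P=\mu_{P_1}$.
\end{itemize}

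At $w=0$, use $D_q^k(z^MQ)=z^{M-k}Q_k$ with $Q_k(0)=[M]_q\cdots[M-k+1]_q\,Q(0)\neq0$, valid for $q$ not a root of unity. This gives order $[M-n]^+$ there. Combined with your Step~1, this yields $\gcd(P,\dots,D_q^nP)=R$ up to a constant, and both identities follow.
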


\begin{proof}
Let $z_j$ be a zero of $P$ with $q$-weight $n_j$ such that $P(q^{-1}z_j)\ne 0$. From Theorem \ref{zero.theorem}, we write $P(z)=[z-z_j]_q^{n_j}H_j(z)$, where $H_j(z)$ is a polynomial with $H_j(q^{n_j}z_j)\ne 0$ for some $j\in\N$. By Corollary \ref{n-1.cor}, $z_j$ is a zero of $D_qP$ with $q$-weight $n_j-1$, that is $D_qP(z) = [z-z_j]_q^{n_j-1}H_{j,1}(z)$, where $H_{j,1}(z)$ is a polynomial with $H_{j,1}(q^{n_j-1}z_j)\ne 0$.

Let $k\in \mathbb{N}$, such that $1\le k \le n$. If $n_j \ge n+1$, $D_q^kP(z)=[z-z_j]_q^{n_j-k}H_{j,k}(z)$, where $H_{j,k}(z)$ is a polynomial satisfying $H_{j,k}(q^{n_j-k}z_j)\ne 0$, which implies ${\gcd(P,D_qP,...,D_q^nP)}$ is divided by $[z-z_j]_q^{n_j-n}$.
If $n_j\le n$, the factors in $[z-z_j]_q^{n_j}$ never contribute to ${\gcd(P,D_qP,...,D_q^nP)}$. From the argument above, we have
$$
    \gcd(P,D_qP,...,D_q^nP)=\prod_{j=1}^{N}[z-z_j]_q^{[n_j-n]^{+}},
$$
which implies (\ref{deg.gcd1}). By (\ref{deg.gcd1}) and (\ref{qmu.eq}), we obtain
\begin{align}\nonumber
&\deg(\gcd(P,D_qP,...,D_q^nP))=\sum_{j=1}^N [n_j-n]^{+}=\sum_{j=1}^{N}\max\{n_j-n,0\}
\\\nonumber
&\quad = \sum_{j=1}^{N}n_j - \sum_{j=1}^{N}\min\{n_j,n\} = \deg(P)-\deg({\rm rad}_q^n(P)).
\end{align}
This completes the proof of Lemma \ref{gcd.lemma}.
\end{proof}

For any nonconstant polynomials $P$ and $Q$, we have the following estimate
\begin{equation}\label{degrad}
    \deg ({\rm rad}_q^n(PQ) ) \le \deg ({\rm rad}_q^n(P) )+\deg ({\rm rad}_q^n(Q) ),
\end{equation}
where $n \in \mathbb{N}$. Moreover, if $P$ and $Q$ are relatively $q$-prime, then
    $$
     \deg ({\rm rad}_q^n(PQ) ) = \deg ({\rm rad}_q^n(P) )+\deg ({\rm rad}_q^n(Q) ).
    $$
Let now $m \in \mathbb{N}$, and let $f_j(z), j=1,2,...,m$ be polynomials. Let us recall $q$-Casorati determinant of entire functions $f_1,f_2,...,f_m$ by, see e.g., \cite[Pages 4282-4283]{HKT2014} or \cite{AAIM},
\begin{equation}\label{def.q-Wronskain}
\begin{split}
    W_q(z)=W_q(f_1,f_2,...,f_m)&=
    \begin{vmatrix}
    f_1(z) & f_2(z) & \cdots & f_m(z)\\
    D_qf_1(z) & D_qf_2(z) & \cdots &D_qf_m(z) \\
    \vdots & \vdots & \ddots & \vdots \\
    D_q^{m-1}f_1(z) & D_q^{m-1}f_2(z) & \cdots &D_q^{m-1}f_m(z)
    \end{vmatrix}\\[7pt]
    &= \frac{
    \begin{vmatrix}
    f_1(z) & f_2(z) & \cdots & f_m(z)\\
    f_1(qz) & f_2(qz) & \cdots & f_m(qz) \\
    \vdots & \vdots & \ddots & \vdots \\
    f_1(q^{m-1}z) & f_2(q^{m-1}z) & \cdots &f_m(q^{m-1}z)
    \end{vmatrix}}{(qz-z)^{m-1}}.
\end{split}
\end{equation}
The $q$-Casorati determinant is a useful tool for analyzing the solutions of $q$-difference equations, see~\cite{Wen2014}. By the properties of the determinant, we have
\begin{lemma}\label{lm.q-Wronskain}
If $q\in\C\setminus\{0,1\}$, then the $q$-Casorati determinant vanishes identically on $\C$ if and only if the functions $f_1,\ldots,f_m$ are linearly dependent over the field of functions
$\pi(z)$ satisfying $\pi(qz)=\pi(z)$.
\end{lemma}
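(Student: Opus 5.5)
Prove both directions separately. The starting point is the second expression in \eqref{def.q-Wronskain}: for $z\neq 0$ and $q\neq 1$, $W_q(z)$ vanishes exactly when the $q$-Casorati matrix $M(z)=\big(f_k(q^{i}z)\big)_{0\le i\le m-1,\,1\le k\le m}$ is singular. By Lemma~\ref{lm2}, this matrix is obtained from the $D_q$-matrix by an invertible lower-triangular transformation. So it suffices to work with $M(z)$ throughout.

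\emph{Sufficiency.} Suppose $\sum_k \pi_k f_k\equiv 0$, where the $\pi_k$ are $q$-periodic and not all zero. Replacing $z$ by $q^iz$ and using $\pi_k(q^iz)=\pi_k(z)$ gives $\sum_k\pi_k(z)f_k(q^iz)=0$ for $i=0,\dots,m-1$. Hence the vector $(\pi_1(z),\dots,\pi_m(z))^T$ lies in the kernel of $M(z)$. This vector is nonzero off a discrete set, because a nonzero meromorphic $q$-periodic function has only isolated zeros. Therefore $\det M\equiv 0$, and so $W_q\equiv 0$.

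\emph{Necessity.} Proceed by induction on $m$. The case $m=1$ is trivial, since $W_q=f_1\equiv0$. Assume $W_q(f_1,\dots,f_m)\equiv 0$. If $W_q(f_1,\dots,f_{m-1})\equiv 0$, induction already gives a dependence. Otherwise choose $r\le m$ maximal so that some $r\times r$ minor is not identically zero; after relabelling, take this to be the leading minor of $f_1,\dots,f_r$ in the rows $i=0,\dots,r-1$, with $r<m$. Consider the $(r+1)\times(r+1)$ matrix built from $f_1,\dots,f_{r+1}$ and the rows $i=0,\dots,r$. Its determinant vanishes, so its cofactor expansion along the last row gives meromorphic $\lambda_k(z)$, equal to the cofactors, with $\lambda_{r+1}\not\equiv0$ and
$$\sum_{k=1}^{r+1}\lambda_k(z)f_k(q^iz)=0,\qquad i=0,\dots,r.$$
Substituting $z\mapsto qz$ in the equations for $i=0,\dots,r-1$ shows that $\big(\lambda_k(qz)\big)_k$ satisfies the same equations for $i=1,\dots,r$. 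Because the rank is exactly $r$, the solution space of the system for rows $1,\dots,r$ is one-dimensional over the meromorphic functions, and that system already contains the row-$r$ equation of the original system. Hence $\lambda_k(qz)=\rho(z)\lambda_k(z)$ for a single meromorphic factor $\rho$.

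Normalize by setting $\pi_k=\lambda_k/\lambda_{r+1}$. Then $\pi_{r+1}=1$, and $\pi_k(qz)=\pi_k(z)$. This gives a nontrivial relation $\sum_k\pi_kf_k\equiv0$ over the field of $q$-periodic functions.

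\emph{Main obstacle.} The delicate step is justifying the one-dimensionality of the solution space used above. The rows-$1,\dots,r$ system is the $r\times r$ nonzero-minor system shifted by $z\mapsto qz$, so its rank is $r$, provided the nonvanishing minor is taken in shifted form. I would therefore choose the nonvanishing minor to be the $i=0,\dots,r-1$ rows, which shift to rows $1,\dots,r$ and are still generically nonzero as functions. The condition $z\neq 0$ and the possibility of poles only affect a discrete set and do not matter.
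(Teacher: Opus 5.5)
Your argument is correct. The paper does not actually prove Lemma~\ref{lm.q-Wronskain}: it only says the lemma follows ``by the properties of the determinant'' and points to the literature. Your proposal supplies the missing argument, which is the classical Casoratian one.

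Its structure is as follows. You replace $W_q$ by $\det M$, using only the invertibility (for $z\neq 0$) of the lower-triangular change of rows given by Lemma~\ref{lm2}. For sufficiency, the $q$-periodic coefficient vector lies in $\ker M(z)$. For necessity, the last-row cofactor vector $\lambda(z)$ and its shift $\lambda(qz)$ both lie in the kernel of rows $1,\dots,r$. That subsystem has rank $r$, because its relevant $r\times r$ minor is the $q$-shift of the nonvanishing leading minor. Hence $\lambda(qz)=\rho(z)\lambda(z)$, and each $\lambda_k/\lambda_{r+1}$ is $q$-periodic.

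Relying only on invertibility is a good choice. The normalizing factor in the second expression of \eqref{def.q-Wronskain} is in fact $\prod_{k=0}^{m-1}q^{k(k-1)/2}(qz-z)^k$, which differs from $(qz-z)^{m-1}$ once $m\ge 3$.

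One expository point concerns the branch $W_q(f_1,\dots,f_{m-1})\not\equiv 0$. There you automatically have $r=m-1$ with the leading minor nonvanishing, so ``choose $r$ maximal and relabel'' is superfluous. Without the induction, that step would need justification, because rows cannot be permuted without breaking the shift structure. You would have to show that once a row lies in the span of its predecessors, so does every later row.

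Your approach also gives more than the bare statement. When the $f_k$ are entire on $\C$ (e.g.\ polynomials), the coefficients you construct are quotients of entire cofactors, hence meromorphic at $0$. A $q$-periodic function meromorphic at $0$ with $|q|\neq 1$ must be constant. So in that setting your proof actually yields ``$W_q\equiv 0$ if and only if $f_1,\dots,f_m$ are linearly dependent over $\C$.'' This is precisely what the proof of Theorem~\ref{fm.theorem} invokes, and the lemma as stated does not literally give it.
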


\noindent
\emph{Proof of Theorem {\rm \ref{fm.theorem}}}.
Let $W_q(z)$ be the $q$-Casorati determinant of polynomials $f_1,f_2,...,f_m$ defined by \eqref{def.q-Wronskain}. It yields that $W_q(z) \not\equiv 0$ by
Lemma \ref{lm.q-Wronskain} and the fact $f_1,f_2,...,f_m$ are linearly independent over $\C$.
Now let us write
    $$
    f_j(z) = A_j \prod_{k=1}^{N_j}[z-z_{j,k}]_q^{n_{j,k}},
    $$
where $A_j$ are constants, $N_j,n_{j,k} \in \N^+$ and $f_j(z_{j,k}q^{-1}) \ne 0$ for
 $j=1,\ldots, m+1$. It implies that $f_j(z)$ is divided by $\gcd(f_j,D_qf_j,...,D_q^{m-1}f_j)=\prod_{k=1}^{N_j}[z-z_{j,k}]_q^{[n_{j,k}-(m-1)]^+}$ for any $j=1,\ldots, m+1$. Obviously, every $f_j(z)~(j=1,2,...,m)$ can be replaced by $f_{m+1}=f_1+f_2+\cdots+f_m$ in $W_q(z)$ by the properties of
determinant.
Moreover, $W_q(z)$ is divided by $\gcd (f_j,D_qf_j,...,D_q^{m-1}f_j)$ for $j \in \{ 1,2,...,m+1\}$, and $f_1,\ldots,f_{m+1}$ are pairwise relatively $q$-prime polynomials, it gives that $W_q(z)$ is divided by
    $$
    Q(z):=\prod_{j=1}^{m+1}\prod_{k=1}^{N_j}[z-z_{j,k}]_q^{[n_{j,k}-m+1]^+},
    $$
which implies that there exists a polynomial $P(z)$ such that $W_q(z)=P(z)Q(z)$.

By the assumption $\min_{1\leq j\leq m+1} \{\deg f_j\}\geq m-1$, we have $\deg (D_q^n f_j)=\deg f_j-n \ge 0$ with $0 \le n \le m-1$. This implies that $\deg W_q(z)$ is never greater than any sum of distinct $m$ of the $\deg f_j(z), 1\le j \le m+1$, minus $\sum_{n=0}^{m-1}n=m(m-1)/2$ as the sum of $\deg (D_q^n f_{j_v})$ for the mutually distinct $m$ integers $j_v \in \{ 1,2,...,m+1 \}$. Since $m \in \mathbb{N}$ and $f_1,f_2,...,f_{m+1}$ are pairwise relatively $q$-prime, by Lemma \ref{gcd.lemma} and \eqref{degrad}, we have
\begin{align}\nonumber
&\min_{1\leq h\leq m+1} \bigg\{\sum_{1 \le j\le m+1,j\ne h} \deg f_j \bigg\}-\frac{1}{2}m(m-1)
\ge \deg \left(\prod_{j=1}^{m+1} \gcd (f_j,D_qf_j,...,D_q^{m-1}f_j )\right)\\\nonumber
=& \sum_{j=1}^{m+1}(\deg (f_j)-\deg ({\rm rad}_q^{m-1} (f_j)))
= \sum_{j=1}^{m+1} \deg (f_j)- \deg ({\rm rad}_q^{m-1}(f_1f_2\cdots f_{m+1})),
\end{align}
which implies the first inequality of \eqref{q-diffMasonineq}. Combining \eqref{small.eq} and the first inequality of \eqref{q-diffMasonineq}, we have the second inequality of \eqref{q-diffMasonineq} follows. We have proved Theorem \ref{fm.theorem}.  $\hfill\square$

\bigskip
In the following, let us state an example to show the inequality in \eqref{q-diffMasonineq} is sharp.
\begin{example}
Let $0<|q|<1$. Set $f_1=[z-1]_q^5$, $f_2=-[z+1]_q^5$, $f_3=2[5]_qz^4$. It is obvious that $f_1$,$f_2$ and $f_3$ are linearly independent. We define
    $$
    f_4=f_1+f_2+f_3=   -2q^3 \left[\begin{array}{cc}
    5\\
    2
    \end{array}
    \right]_qz^2-2q^{10}.
    $$
Let us choose suitable $q$ such that $\pm q^{-1}, 0,\pm q,\pm q^2,\pm q^3,\pm q^4, \pm q^5$ are not zeros of $f_4$. It yields that $f_1$, $f_2$, $f_3$ and $f_4$ are pairwise relatively $q$-prime.
According to \eqref{radq.eq}, we have $\deg({\rm rad}_q^2(f_1))=2$, $\deg({\rm rad}_q^2(f_2))=2$, $\deg({\rm rad}_q^2(f_3))=2$, $\deg({\rm rad}_q^2(f_4))=2$ and $\max_{1\le i\le4}\{\deg f_i\}=5$. It shows that the inequality in \eqref{q-diffMasonineq} is sharp.

\end{example}

\section{Polynomial solutions of $q$-difference Fermat functional equations}

In this section, we apply Theorem~\ref{qabc.theorem} and Theorem~\ref{fm.theorem} to $q$-difference Fermat type functional equations for investigating nonexistence of polynomial solutions.
We adopt the expression $[P]_q^n:=P(z)P(qz)\cdots P(q^{n-1}z)$ instead of $P^n$ for a polynomial $P$ in the Fermat type functional equations.  For the Fermat type functional equations, see e.g., \cite{GH2004},~\cite{GIK}.

\begin{theorem}\label{Equ3}
Let $a$, $b$ and $c$ be polynomials, not all constants, $q\in\C\setminus\{0\}$ satisfying $|q|\neq 1$, and $n\in\N$ such that $[a]_q^n$, $[b]_q^n$ and $[c]_q^n$ are relatively $q$-prime and satisfy
    \begin{equation}\label{abc.equ}
    [a]_q^n+[b]_q^n=[c]_q^n.
    \end{equation}
Then $n\leq 2$. In addition, if one of $a$, $b$ and $c$ is a constant, then $n=1$.
\end{theorem}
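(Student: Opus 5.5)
Apply Theorem~\ref{qabc.theorem} to the triple $A=[a]_q^n$, $B=[b]_q^n$, $C=[c]_q^n$. These are relatively $q$-prime by hypothesis, satisfy $A+B=C$, and are not all constant: if $a$ is nonconstant, then $\deg [a]_q^n = n\deg a>0$. So we need only compare the degrees of these polynomials with the degree of ${\rm rad}_q(ABC)$. The key step is the bound
\[
\deg({\rm rad}_q([P]_q^n))\le \deg P
\]
for any nonconstant polynomial $P$. To see it, write $P(z)=\alpha\prod_{i=1}^{d}(z-w_i)$ with $d=\deg P$, listing zeros with multiplicity. Then $P(q^kz)=\alpha q^{kd}\prod_i(z-q^{-k}w_i)$. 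Hence
\[
[P]_q^n=\alpha^n q^{dn(n-1)/2}\prod_{i=1}^d (z-w_i)(z-q^{-1}w_i)\cdots(z-q^{-(n-1)}w_i)=C_0\prod_{i=1}^d [z-q^{-(n-1)}w_i]_q^{n}.
\]
So $[P]_q^n$ is a product of $d$ blocks $[z-u_i]_q^{n}$ with $u_i=q^{-(n-1)}w_i$. The radical in \eqref{q-radical} is built from the canonical decomposition \eqref{P.eq}, and I do not want to identify that decomposition exactly; I only want the inequality. For this I will use the general subadditivity $\deg{\rm rad}_q(PQ)\le\deg{\rm rad}_q(P)+\deg{\rm rad}_q(Q)$ stated in Section 3, together with the fact that ${\rm rad}_q([z-u]_q^n)$ has degree at most $1$. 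That fact holds because $z=u$ is a zero of $[z-u]_q^n$ with $q$-weight $n$ (Example in Section 2, or Theorem~\ref{zero.theorem}), so its decomposition has a single block. Iterating subadditivity gives the bound. Here $|q|\ne1$ ensures the points $q^ku$ are distinct, so the blocks behave as expected.

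With this bound, Theorem~\ref{qabc.theorem} gives
\[
n\max\{\deg a,\deg b,\deg c\}\le \deg({\rm rad}_q(ABC))-1\le \deg a+\deg b+\deg c-1,
\]
where constant factors contribute $0$ to the radical. Let $M=\max\{\deg a,\deg b,\deg c\}\ge1$. The right side is at most $3M-1$, so $nM\le 3M-1$, forcing $n\le 2$. If one of $a,b,c$ is constant, the right side is at most $2M-1$, so $nM\le 2M-1$ and $n\le1$. The case $n=0$ is degenerate: then $1+1=1$ is impossible, so $n\ge1$ and we get $n=1$.

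I expect the main obstacle to be the rigorous justification of $\deg{\rm rad}_q([P]_q^n)\le\deg P$. The canonical representation \eqref{P.eq} is defined greedily, starting from zeros $z_j$ with $P(q^{-1}z_j)\ne0$. Blocks coming from different $w_i$ lying on the same $q$-spiral may merge, which only lowers the count, or may interleave. Relying on the subadditivity inequality stated in the paper avoids having to analyze this merging directly. If that inequality is considered insufficiently justified, I would instead prove directly that the greedy procedure applied to a product of $d$ blocks $[z-u_i]_q^n$ produces at most $d$ blocks. I would do this by induction on $d$, peeling off the block whose starting point $u_i$ is minimal along its $q$-chain.
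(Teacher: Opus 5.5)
Your proof is correct and follows the paper's route: apply Theorem~\ref{qabc.theorem} to $[a]_q^n,[b]_q^n,[c]_q^n$ and use $\deg({\rm rad}_q([P]_q^n))\le \deg P$. The only cosmetic difference is that the paper writes $n\deg(x)\le \deg(a)+\deg(b)+\deg(c)-1$ for each $x\in\{a,b,c\}$ and sums these three inequalities (two when $c$ is constant), whereas you compare directly with $M=\max\{\deg a,\deg b,\deg c\}$. Beyond that, your factorization $[P]_q^n=C_0\prod_i[z-q^{-(n-1)}w_i]_q^n$ plus subadditivity actually justifies the radical bound, which the paper uses without comment (and where your inequality suffices in place of the paper's equality \eqref{PQ.eq}), and you also dispose of the degenerate case $n=0$, which the paper ignores.
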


\begin{proof}
Let us first assume that $a$, $b$ and $c$ are all nonconstant polynomials.
By Theorem~\ref{qabc.theorem}, we have
    \begin{equation}\label{a.eq}
    \begin{split}
    n\deg(a)=\deg([a]_q^n)&\leq\max\{ \deg([a]_q^n), \deg([b]_q^n),\deg([c]_q^n)\}\\
    &\leq \deg({\rm rad}_q([a]_q^n~[b]_q^n~[c]_q^n))-1\\
    &=\deg({\rm rad}_q([a]_q^n))+
    \deg({\rm rad}_q([b]_q^n))+
    \deg({\rm rad}_q([c]_q^n))-1\\
    &\leq \deg(a)+\deg(b)+\deg(c)-1.
    \end{split}
    \end{equation}
Similarly, for $b$ and $c$, we have
    \begin{equation}\label{b.eq}
    n\deg(b)\leq \deg(a)+\deg(b)+\deg(c)-1
    \end{equation}
and
    \begin{equation}\label{c.eq}
    n\deg(c)\leq \deg(a)+\deg(b)+\deg(c)-1.
    \end{equation}
Combining inequalities \eqref{a.eq}, \eqref{b.eq} and \eqref{c.eq}, we obtain
    $$
    n(\deg(a)+\deg(b)+\deg(c))\leq 3(\deg(a)+\deg(b)+\deg(c))-3,
    $$
which implies that $n\leq 2$.

We secondly assume one of $a$, $b$ and $c$ is a constant. Without loss of generality, we assume that $c$ is a constant. Then \eqref{a.eq} and \eqref{b.eq} yield
    $$
    n(\deg(a)+\deg(b))\leq 2(\deg(a)+\deg(b))-2,
    $$
which shows that $n\leq 1$. We proved our assertion.
\end{proof}

\begin{theorem}\label{Equn.eq}
Let $m\in \N$, $m\geq 2$, $n\in\N$ and $q\in\C\setminus\{0\}$ such that $|q|\neq 1$. Suppose that there exist $f_1,\ldots,f_{m+1}$ nonconstant polynomials satisfying
    $$
    [f_1]_q^n+[f_2]_q^n+\cdots
    +[f_m]_q^n=[f_{m+1}]_q^n.
    $$
Further suppose  that $[f_1]_q^n$, $\ldots$, $[f_{m+1}]_q^n$ are pairwise relatively $q$-prime and $[f_1]_q^n,\ldots,[f_{m+1}]_q^n$ are linearly independent.
Then
$$
n\leq m^2-1-\frac{m(m-1)}{2\max_{1\leq i\leq m+1}\{\deg(f_i)\}}.
$$
\end{theorem}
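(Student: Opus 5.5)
Apply Theorem~\ref{fm.theorem} to the polynomials $F_i:=[f_i]_q^n$, $i=1,\dots,m+1$, in the same way Theorem~\ref{qabc.theorem} was used in the proof of Theorem~\ref{Equ3}. The hypotheses of Theorem~\ref{fm.theorem} are largely given. The $F_i$ are pairwise relatively $q$-prime, $F_1,\dots,F_m$ are linearly independent, and $F_1+\cdots+F_m=F_{m+1}$. The degree condition $\min_i\deg F_i\ge m-1$ must still be checked. Since each $f_i$ is nonconstant, $\deg F_i=n\deg f_i\ge n$. For $n\ge m-1$ the condition holds. If $n<m-1$, the claimed bound is already satisfied. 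Indeed, with $D:=\max_i\deg f_i\ge1$, we have $m(m-1)/(2D)\le m(m-1)/2$, so the right-hand side is at least $m^2-1-m(m-1)/2=(m^2+m-2)/2\ge m-1>n$ for $m\ge2$. So I first dispose of the case $n<m-1$ in this way and then assume $n\ge m-1$.

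Theorem~\ref{fm.theorem} then gives
\[
nD=\max_i\deg F_i\le (m-1)\deg\big({\rm rad}_q(F_1\cdots F_{m+1})\big)-\tfrac12 m(m-1).
\]
To bound the radical, I would use subadditivity, $\deg{\rm rad}_q(PQ)\le\deg{\rm rad}_q(P)+\deg{\rm rad}_q(Q)$, which holds with equality here by pairwise $q$-primality. I would also use $\deg{\rm rad}_q([f]_q^n)\le\deg f$. The latter is the key point, and it is the one used implicitly in \eqref{a.eq}. If $w$ is a root of $f$, then $[f]_q^n=f(z)f(qz)\cdots f(q^{n-1}z)$ has roots $w,wq^{-1},\dots,wq^{-(n-1)}$. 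These form one $q$-chain $[z-wq^{-(n-1)}]_q^n$, so every root of $f$ contributes a single linear factor to the $q$-radical.

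Summing over $i$ gives $\deg{\rm rad}_q(F_1\cdots F_{m+1})\le\sum_{i=1}^{m+1}\deg f_i\le (m+1)D$. Therefore
\[
nD\le (m-1)(m+1)D-\tfrac12 m(m-1),
\]
and dividing by $D$ yields $n\le m^2-1-\frac{m(m-1)}{2D}$, as claimed.

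The main obstacle is making rigorous the inequality $\deg{\rm rad}_q([f]_q^n)\le\deg f$ when several roots of $f$ lie on a common $q$-spiral $\{wq^k\}$. In that case the chains coming from different roots can overlap or merge, and the unique decomposition \eqref{P.eq} has to be analysed. My plan is to group the roots of $f$ into $q$-orbits and show that merging chains can only reduce the number $N$ of factors in \eqref{P.eq}. For this I would argue greedily: starting from the lowest point $z_j$ of each chain, the $q$-weight is at least the length of the longest run, so the count of factors never exceeds the number of roots of $f$ counted with multiplicity. That count equals $\deg f$.
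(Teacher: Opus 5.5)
Your proposal is correct and is the paper's argument: apply Theorem~\ref{fm.theorem} to $F_i=[f_i]_q^n$, use its second inequality, bound $\deg{\rm rad}_q(F_1\cdots F_{m+1})\le\sum_i\deg f_i\le(m+1)\max_i\deg f_i$, and divide by $\max_i\deg f_i\ge 1$. Your separate treatment of $n<m-1$ (which secures the hypothesis $\min_i\deg F_i\ge m-1$) and your attention to $\deg{\rm rad}_q([f]_q^n)\le\deg f$ make explicit two points the paper uses silently; your greedy sketch for the latter is loose, though, because later factors can have $q$-weight below $n$ (e.g.\ $f=(z-q)(z-q^2)$, $n=2$ gives $[f]_q^2=q^2[z-1]_q^3(z-q)$), and the clean count is that on each $q$-orbit $\{wq^k\}$ the number of factors in \eqref{P.eq} starting at $wq^k$ is exactly $[\mu(k)-\mu(k-1)]^+$, with $\mu(k)$ the multiplicity of $wq^k$, which is at most the number of chains starting there.
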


\begin{proof}
By using Theorem~\ref{fm.theorem}, we have
    \begin{equation*}
    \begin{split}
    n \max_{1\leq j\leq m+1}\{\deg(f_j)\}
    &\leq (m-1)\deg\left(\text{rad}_q\left([f_1]_q^n[f_2]_q^n\cdots [f_{m+1}]_q^n\right)\right)-\frac12m(m-1)\\
    &\leq (m+1)(m-1)\max_{1\leq j\leq m+1}\{\deg(f_j)\}-\frac12m(m-1).
    \end{split}
    \end{equation*}
Since $\max_{1\leq j\leq m+1}\{\deg(f_j)\}\geq 1$, we proved our assertion.

\end{proof}

\bigskip

\noindent
\emph{J.-T. Lu}\\
\textsc{Shantou University, Department of Mathematics,\\
Daxue Road No.~243, Shantou 515063, China}\\
\texttt{Email:24jtlu@stu.edu.cn}

\bigskip
\noindent
\emph{X.-X. Lu}\\
\textsc{Ankang University, School of Mathematics and Statistics,\\
Yucai Road No.~92, Ankang 725000, China}\\
\texttt{Email:luxx@aku.edu.cn}

\bigskip
\noindent
\emph{Z.-T.~Wen}\\
\textsc{Shantou University, Department of Mathematics,\\
Daxue Road No.~243, Shantou 515063, China}\\
\texttt{Email:zhtwen@stu.edu.cn}

\vspace{1cm}
\end{document}